\newif\ifdraft
\newcommand{\R}{\mathbb{R}}
\newcommand{\N}{\mathbb{N}}
\newcommand{\eremk}{\hbox{}\hfill\rule{0.8ex}{0.8ex}}
\newtheorem{definition}{Definition}[section]
\newtheorem{theorem}[definition]{Theorem}
\newtheorem{lemma}[definition]{Lemma}
\newtheorem{proposition}[definition]{Proposition}
\newtheorem{remark}[definition]{Remark}
\newtheorem{algorithm}[definition]{Algorithm}
\numberwithin{equation}{section}
\newcommand{\todo}[1]{\textcolor{red}{#1}}
\newcommand{\todo}[1]{}
\newcommand{\diam}{\operatorname*{diam}}
\newcommand{\abs}[1]{\left\vert #1 \right\vert}
\newcommand{\skp}[1]{\left< #1 \right>}
\newcommand{\norm}[1]{\left\| #1 \right\|}
\newcommand{\ra}[0]{\rightarrow}
\newcommand{\T}{\mathcal{T}}
\newcommand{\dist}{\operatorname*{dist}}
\renewcommand{\div}{\operatorname*{div}}
\newcommand{\supp}{\operatorname*{supp}}
\def\Crel{C_{\rm rel}}
\def\Ceff{C_{\rm eff}}
\def\Cmark{C_{\rm mark}}
\def\Clin{C_{\rm lin}}
\def\Copt{C_{\rm opt}}
\def\copt{c_{\rm opt}}
\def\qlin{q_{\rm lin}}
\def\opt{{\rm opt}}
\def\set#1#2{\big\{#1 \,:\, #2\big\}}
\def\TT{\T}
\def\MM{\mathcal{M}}
\def\interior{\operatorname*{interior}}
\def\patch{\Omega}
\def\refine{{\rm refine}}
\def\Cinv{C_{\rm inv}}
\def\UU{\mathcal{U}}
\def\Cstab{C_{\rm stab}}
\def\qred{q_{\rm red}}
\def\Cdrl{C_{\rm rel}}
\def\coarse{\bullet}
\def\fine{\circ}
\def\eps{\varepsilon}
\def\reff#1#2{\stackrel{\eqref{#1}}{#2}}
\def\NN{\mathcal{N}}
\def\Csz{C_{\rm SZ}}
\newcommand\blfootnote[1]{%
  \begingroup
  \renewcommand\thefootnote{}\footnote{#1}%
  \addtocounter{footnote}{-1}%
  \endgroup
}
\begin{document}

\hskip 10 pt

\begin{center}
{\fontsize{14}{20}\bf 
Quasi-optimal convergence rate for an adaptive method for the integral fractional Laplacian}
\end{center}

\begin{center}
\textbf{Markus Faustmann, Jens Markus Melenk, Dirk Praetorius}\\
\bigskip
{Institute for Analysis and Scientific Computing}\\
TU Wien\\
Wiedner Hauptstr. 8-10, 1040 Wien, Austria\\markus.faustmann@tuwien.ac.at, melenk@tuwien.ac.at, dirk.praetorius@asc.tuwien.ac.at\\
\bigskip
\end{center}

\begin{abstract}
For the discretization of the integral fractional Laplacian $(-\Delta)^s$, $0 < s < 1$, based on piecewise 
linear functions, we present and analyze a reliable weighted residual {\sl a posteriori} error estimator. 
In order to compensate for a lack of $L^2$-regularity of the residual in the regime $3/4 < s < 1$, 
this weighted residual error estimator includes as an additional weight a power of the 
distance from the mesh skeleton. We prove optimal convergence rates for an 
$h$-adaptive algorithm driven by this error estimator. Key to the analysis of the adaptive algorithm are 
local inverse estimates for the fractional Laplacian.  

\blfootnote{{\bf Acknowledgement.} The research of JMM and DP is funded by the Austrian Science Fund (FWF) 
by the special research program {\it Taming complexity in PDE systems} (grant SFB F65). 
Additionally, DP acknowledges support through the FWF research project 
{\it Optimal adaptivity for BEM and FEM-BEM coupling} (grant P27005).
}
\end{abstract}

\section{Introduction}
\medskip
Fractional differential operators such as the fractional Laplacian $(-\Delta)^s$, $0 < s < 1$, 
are an increasingly important modeling tool in, e.g.,  physics, finance, or image processing. 
In contrast to classical (integer order) differential operators, these fractional operators 
are nonlocal, which makes both the analysis and the analysis of numerical methods challenging. 

The numerical treatment of such non-local operators is currently a very active research field. 
A variety of approaches for the different versions of the fractional Laplacian in multi-d, or, more generally, 
fractional powers of differential operators are available: we mention 
Galerkin/finite element methods (FEMs) (see, e.g., \cite{NOS15,AcoBor17,ABH18,BMNOSS19} and references therein), 
techniques that exploit the connection of the fractional Laplacian with semigroup theory, 
\cite{bonito-pasciak15,bonito-pasciak17}, and techniques that rely on the connection with eigenfunction expansions, 
\cite{song-xu-karniadakis17,ainsworth-glusa18}; for more details, we refer the reader to the recent surveys 
\cite{BBNOS17,2018arXiv180109767L}. The vast majority of the numerical analysis literature focuses on {\sl a priori} error
analyses, and few results on {\sl a posteriori} error analysis are available in spite of the fact that solutions
to fractional differential equations typically have singularities (even for smooth
input data), which naturally calls for using locally refined meshes; work on {\sl a posteriori} error estimation
in a Galerkin setting includes 
\cite{nochetto-von-petersdorff-zhang10,CNOS15,AinGlu17,BBNOS17,CNOS15}
and on gradient recovery \cite{zhao-hu-cai-karniadakis17}.
One challenge in devising {\sl a posteriori} error estimators 
are poor properties of the residual, namely, it is not necessarily in $L^2$. One can overcome this lack of 
$L^2$-regularity by measuring the residual in appropriate $L^p$-spaces, 
\cite{nochetto-von-petersdorff-zhang10, BBNOS17} (some restrictions on $s$ apply); an alternative route, which is taken in
the present work, is to measure the residual in weighted $L^2$-spaces, where the weight is given by a power 
of the distance from the mesh skeleton.  The resulting {\sl a posteriori} error estimator is shown to be reliable in 
Theorem~\ref{theorem:reliable} in the full range $0 < s < 1$. This {\sl a posteriori} error estimator 
is a basic building block of the adaptive Algorithm~\ref{algorithm} that we analyze. We show 
in Theorem~\ref{theorem:algorithm} that it yields a sequence of approximations that converge at the 
optimal algebraic rate (with respect to an appropriate nonlinear approximation class). 
Such an optimal convergence result is well-known for adaptive FEMs for linear second-order elliptic equations 
(see, e.g., \cite{BDD04,stevenson07,ckns08,ffp14}) or the classical BEM (see~\cite{gantumur13,FKMP13,part1,part2}), 
and the present work extends these results to the integral fractional Laplacian. Our convergence result is obtained using 
the abstract, general framework of \cite{CFPP14} for proving such optimal convergence results,  
which reduces the proof of optimal algebraic convergence to verifying four properties (\ref{axiom:stability})---(\ref{axiom:quasiorthogonality})  of the 
error estimator (cf.\ Proposition~\ref{prop:axioms}). 
A key ingredient to establish the properties (A1)---(A4) in the case of the BEM are local inverse estimates for 
the nonlocal boundary integral operators, \cite{gantumur13,FKMP13,AFFKMP17}. Also in the present case of the 
fractional Laplacian underlying our analysis is the inverse estimate 
\begin{align}\label{eq:investintro}
\norm{\widetilde h_\ell^s(-\Delta)^s v_\ell}_{L^2(\Omega)} \leq C \norm{v_\ell}_{H^s(\Omega)},
\end{align}
where $v_\ell$ is a piecewise polynomial and $\widetilde h_\ell$ is the local mesh width function 
(modified by some additional weight function for $s \ge 1/2$; cf.~\eqref{eq:estimator}), and
is essentially sufficient for proving optimal convergence of the adaptive algorithm. In this work,
such an inverse estimate for the nonlocal fractional Laplacian is provided in Theorem~\ref{theorem:invest}. We 
highlight that such an inverse estimate proves useful in other applications such as the analysis of 
multilevel preconditioning for the fractional Laplacian on locally refined meshes. 

The present paper is structured as follows: In Section~\ref{sec:mainresults}, we provide the 
continuous model problem and its Galerkin discretization by piecewise linears. 
Moreover, both the classical weighted residual \emph{a posteriori} error indicators (for $0 < s<1/2$)
and our modified weighted error estimator (for $1/2 \le s < 1$) are presented. The adaptive algorithm,
the optimal convergence of the algorithm, as well as the inverse inequality, which plays the key role 
in our proofs, are stated. In Section~\ref{sec:algorithm}, the four essential properties (A1)---(A4) of the 
error estimator from \cite{CFPP14} are recalled and verified with the aid of the inverse inequality. 
The inverse inequality is then proved in Section~\ref{sec:invest}. Finally, Section~\ref{sec:numerics} 
provides numerical examples that illustrate the optimal convergence of the proposed adaptive method.

Concerning notation: For bounded, open sets $\omega \subset \R^d$ integer order Sobolev 
spaces $H^t(\omega)$, $t \in \N_0$, are  defined in the usual way. For $t \in (0,1)$, 
fractional Sobolev spaces are given in terms of the seminorm 
$|\cdot|_{H^t(\omega)}$ and the full norm $\|\cdot\|_{H^t(\omega)}$ by 
\begin{equation}
\label{eq:norm} 
|v|^2_{H^t(\omega)} = \int_{x \in \omega} \int_{y \in \omega} \frac{|v(x) - v(y)|^2}{\abs{x-y}^{d+2t}}
\,dx\,dy, 
\qquad \|v\|^2_{H^t(\omega)} = \|v\|^2_{L^2(\omega)} + |v|^2_{H^t(\omega)}, 
\end{equation}
where we denote the Euclidean distance in $\R^d$ by $\abs{\;\cdot\;}$. 
Moreover, for bounded Lipschitz domains $\Omega \subset \R^d$, we define the spaces
\begin{align*}
 \widetilde{H}^{t}(\Omega) := \{u \in H^t(\R^d) \,: \, u\equiv 0 \; \text{on} \; \R^d \backslash \Omega \}
\end{align*}
of $H^t$-functions with zero extension, equipped with the norm
\begin{align*}
 \norm{v}_{\widetilde{H}^{t}(\Omega)}^2 := \norm{v}_{H^t(\Omega)}^2 + \norm{v/\rho^t}_{L^2(\Omega)}^2,
\end{align*}
where $\rho(x)$ is the distance of a point $x \in \Omega$ to the boundary $\partial \Omega$.

\section{Main Results}\label{sec:mainresults}

\subsection{The fractional Laplacian and the Caffarelli-Silvestre extension}
There are several different ways to define the fractional Laplacian $(-\Delta)^s$. A classical 
definition on the full space ${\mathbb R}^d$ is in terms of the Fourier transformation ${\mathcal F}$, 
i.e., $({\mathcal F} (-\Delta)^s u)(\xi) = |\xi|^{2s} ({\mathcal F} u)(\xi)$. 
A consequence of 
this definition is the mapping property, (see, e.g., \cite{BBNOS17})
\begin{equation}
\label{eq:mapping-property}
(-\Delta)^s: H^t(\R^d) \rightarrow H^{t-2s}(\R^d), \qquad t \ge s, 
\end{equation}
where the Sobolev spaces $H^t(\R^d)$, $t \in \R$, are defined in terms of the Fourier transformation,
\cite[(3.21)]{mclean00}. 
Alternative, equivalent definitions of $(-\Delta)^s$ exist, e.g., via semi-group or 
operator theory,~\cite{Kwasnicki}. For our purposes, a convenient representation of the 
fractional Laplacian is as the principal value integral  
\begin{align}\label{eq:fracLaplaceDef}
(-\Delta)^su(x) := C(d,s) \; \text{P.V.} \int_{\R^d}\frac{u(x)-u(y)}{\abs{x-y}^{d+2s}} \, dy \quad \text{with} \quad
C(d,s):= - 2^{2s}\frac{\Gamma(s+d/2)}{\pi^{d/2}\Gamma(-s)},
\end{align}
where $\Gamma(\cdot)$ denotes the Gamma function. 
An important observation made in~\cite{CafSil07} is that the fractional Laplacian can be understood as 
a Dirichlet-to-Neumann operator of a degenerate elliptic PDE, the so-called extension problem 
on a half space. In order to describe this degenerated elliptic PDE, we need weighted Sobolev spaces.
For 
\begin{equation}
\label{eq:alpha} 
\alpha :=1-2s \in (-1,1)
\end{equation} 
 and measurable subsets $\omega \subset \R^d \times \R^+$, we 
define the weighted $L^2$-norm 
\begin{align*}
 \norm{U}_{L^2_\alpha(\omega)}^2 := \int_{\omega} \mathcal{Y}^{\alpha}\abs{U(x,\mathcal{Y})}^2 dx \, d\mathcal{Y} 
\end{align*}
and denote by $L^2_\alpha(\omega)$ the space of 
square-integrable functions with respect to the weight $\mathcal{Y}^\alpha$. 
The Caffarelli-Silvestre extension is conveniently described 
in terms of the Beppo-Levi space 
${\mathcal B}^1_{\alpha}(\R^d \times  \R^+):= \{U \in {\mathcal D}^\prime(\R^{d} \times \R^+)\,|\, 
\nabla U \in L^2_\alpha(\R^d \times \R^+)\}$. 
Elements of ${\mathcal B}^1_\alpha(\R^d \times \R^+)$ are  in fact in $L^2_{\rm loc}(\R^d \times \R^+)$
and
one can give meaning to their trace 
at $\mathcal{Y} = 0$, which is denoted $\operatorname{tr} U$. Recalling $\alpha = 1-2s$, one has in fact
$\operatorname{tr} U \in H^s_{\rm loc}(\R^d)$ (see, e.g., \cite{KarMel18}). 
%
The extension problem by Caffarelli-Silvestre~\cite{CafSil07} reads as follows:
For $u \in  \widetilde{H}^{s}(\Omega)$ and $\alpha = 1-2s$, 
let $U \in {\mathcal B}_{\alpha}^1(\R^d\times \R^+)$ solve 
\begin{subequations}\label{eq:extension}
\begin{align}
\label{eq:extension-a}
 \div (\mathcal{Y}^\alpha \nabla U) &= 0  \;\quad\quad\text{in} \; \R^d \times (0,\infty), \\ 
\label{eq:extension-b}
 \operatorname{tr} U & = u  \,\quad\quad\text{in} \; \R^d.
\end{align}
\end{subequations}
Then, the fractional Laplacian can be recovered as the Neumann data of the extension problem in the 
sense of distributions, \cite[Thm.~{3.1}]{cabre-sire14}:
\begin{align}
 -d_s \lim_{\mathcal{Y}\ra 0^+} \mathcal{Y}^\alpha \partial_\mathcal{Y} U(x,\mathcal{Y}) = (-\Delta)^s u, 
\qquad 
d_s = 2^{1-2s}\abs{\Gamma(s)}/\Gamma(1-s). 
\end{align}
\begin{remark}
For $u \in H^s(\R^d)$, the extension $U$ given by \eqref{eq:extension} can also be characterized as \linebreak
$U = \operatorname{arg min} \{V \in {\mathcal B}^1_\alpha(\R^d \times \R^+)\,:\, 
\operatorname{tr} V = u\}$. 
\eremk
\end{remark}

\subsection{The model problem}
For a bounded Lipschitz domain $\Omega \subset \R^d$, we consider the problem 
\begin{subequations}\label{eq:modelproblem}
\begin{align}
 (-\Delta)^su &= f \qquad \text{in}\, \Omega, \\
 u &= 0 \quad \quad\, \text{in}\, \Omega^c:=\R^d \times \overline{\Omega},
\end{align}
\end{subequations}
where $f$ is a given right-hand side. The fractional Laplacian $(-\Delta)^su$ is defined by 
formula~\eqref{eq:fracLaplaceDef} for $x \in \Omega$.   
The weak formulation of~\eqref{eq:modelproblem} reads as follows~\cite[Thm. 1.1 (e),(g)]{Kwasnicki}: Find $u \in \widetilde{H}^s(\Omega)$ such that
\begin{align}\label{eq:weakform}
 a(u,v) 
 := \frac{C(d,s)}{2} \int\int_{\R^d\times\R^d} 
 \frac{(u(x)-u(y))(v(x)-v(y))}{\abs{x-y}^{d+2s}} \, dx \, dy = \int_{\Omega} fv \,dx 
 \quad \text{for all } v \in \widetilde{H}^s(\Omega).
\end{align}
Existence and uniqueness of $u \in \widetilde{H}^s(\Omega)$ follow from
the Lax--Milgram lemma.

\subsection{Discretization}
Henceforth, we assume $\Omega$ to be polyhedral\footnote{This is not essential but allows us to  
work with affine elements.}.  
Let $\T_\ell$ be a regular (in the sense of Ciarlet) triangulation of $\Omega$ consisting of open simplices that is $\gamma$-shape regular in the sense of $\max_{T \in \T_\ell} \big( \diam(T) / |T|^{1/d} \big) \le \gamma < \infty$. 
Here, $\diam(T):=\sup_{x,y\in T}|x-y|$ denotes the Euclidean diameter of $T$, whereas $|T|$ is the $d$-dimensional Lebesgue volume. To ease notation, we define the piecewise constant mesh size function $h_\ell \in L^\infty(\Omega)$ by
\begin{align}
 h_\ell|_T := h_\ell(T) := |T|^{1/d}
 \quad \text{for all } T \in \T_\ell.
\end{align}
Moreover, for all elements $T \in \T_\ell$, we define the element patch
\begin{align}\label{eq:patch}
 \patch_\ell(T) := \interior\left(\bigcup_{T' \in \TT_\ell(T)} \overline{T'} \right) \subseteq \Omega,
 \quad \text{where} \quad
 \TT_\ell(T) := \set{T' \in \TT_\ell}{\overline{T} \cap \overline{T'} \neq \emptyset}
\end{align}
consists of $T$ and all of its neighbors. 
$\patch_\ell^k(T) := {\rm interior}\big(\bigcup\set{T' \in \TT_\ell}{\overline{T'} \cap 
\overline{\patch_\ell^{k-1}(T)}\ne \emptyset}\big)$ 
is the $k$-th order patch of $T$ (and $\patch_\ell^1(T) := \patch_\ell(T)$).
Later on, the index $\ell \in \N_0$ indicates the step of an adaptive algorithm, where the triangulations $\TT_\ell$ are obtained by local mesh refinement based on newest vertex bisection (NVB). We refer, e.g., to~\cite{kpp13} for the NVB algorithm in $d = 2$ or to~\cite{stevenson08} for $d \ge 2$.

For $T \subseteq \Omega$, let $\mathcal{P}^1(T)$ denote the space of all affine functions on $T$.
We define spaces of $\TT_\ell$-piecewise affine and globally continuous functions
\begin{align}
S^{1,1}(\T_\ell) := 
\{u \in H^1(\Omega) \,:\, u|_{T} \in \mathcal{P}^1(T) \text{ for all } T \in \T_\ell\}
\quad \text{and} \quad
S^{1,1}_0(\T_\ell) := S^{1,1}(\T_\ell) \cap H_0^1(\Omega).
\end{align}%
For the discretization of~\eqref{eq:weakform}, we consider the Galerkin method based on 
$S^{1,1}_0(\T_\ell)$. The Lax--Milgram lemma provides existence and uniqueness of 
$u_\ell \in S^{1,1}_0(\T_\ell)$ such that
\begin{align}\label{eq:galerkin}
 a(u_\ell, v_\ell) = \int_{\Omega} fv_\ell \, dx 
 \quad \text{for all } v_\ell \in S^{1,1}_0(\T_\ell).
\end{align}
Throughout the present work, we will need a weight function that measures the distance 
from the mesh skeleton: For a mesh $\T_\ell$ we introduce
\begin{equation}
\label{eq:w_ell}
\omega_\ell(x):= \inf_{T\in \T_\ell} \dist(x,\partial T) 
= \inf_{T \in \T_\ell} \inf_{y \in \partial T} \abs{x - y}. 
\end{equation}
%
\subsection{\textsl{A~posteriori} error estimation}
Let $v_\ell \in S^{1,1}_0(\T_\ell)$. For $0 < s < 3/4$, due to the 
mapping properties of the fractional Laplacian given in~(\ref{eq:mapping-property}), 
we have $(-\Delta)^s v_\ell \in L^2(\Omega)$. 
For $3/4 < s < 1$, the function $(-\Delta)^s v_\ell$ is (generically) no longer in 
$L^2(\Omega)$ as it has singularities at the mesh skeleton. 
Its blow-up can be measured in terms of the distance from the mesh skeleton as has 
been noticed in \cite{BBNOS17} and is made precise in the following lemma, which is proved 
in Section~\ref{sec:invest} below. 

\begin{lemma}\label{lem:blowup}
Let $\omega_\ell$ be given by (\ref{eq:w_ell}). 
Given $0 < s < 1$, fix $\beta > 2s-3/2$, e.g., $\beta := s-1/2$. 
For any $v_\ell \in S^{1,1}_0(\T_\ell)$, we then have $\omega_\ell^\beta (-\Delta)^s v_\ell \in L^2(\Omega)$.
\end{lemma}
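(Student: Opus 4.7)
The plan is to first derive the pointwise blow-up bound $|(-\Delta)^s v_\ell(x)| \lesssim 1 + \omega_\ell(x)^{1-2s}$ for $x$ in the interior of each mesh element (with a logarithmic modification at $s = 1/2$ that does not affect integrability), and then to integrate against the weight $\omega_\ell^{2\beta}$ in tubular coordinates around the codimension-$1$ mesh skeleton. Combining these two steps with the geometric fact that the skeleton is the union of finitely many Lipschitz faces yields
\begin{equation*}
\int_\Omega \omega_\ell^{2\beta}(x)\,|(-\Delta)^s v_\ell(x)|^2\,dx \lesssim \int_\Omega \omega_\ell^{2\beta + 2 - 4s}(x)\,dx,
\end{equation*}
and, parametrizing a tubular neighborhood of each face of $\T_\ell$ by the distance $r$ to that face, the right-hand side is comparable to $\sum_F \int_F \int_0^{c} r^{2\beta + 2 - 4s}\,dr\,dS$, which is finite iff $2\beta + 2 - 4s > -1$, equivalently $\beta > 2s - 3/2$. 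The distinguished choice $\beta = s - 1/2$ satisfies this precisely because $s < 1$.

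\textbf{Pointwise bound.} Fix $x$ in the interior of some element $T_x \in \T_\ell$ and set $\delta := \dist(x, \partial T_x) \ge \omega_\ell(x)$, so the ball $B(x,\delta)\subset T_x$. I would split the principal-value representation \eqref{eq:fracLaplaceDef} into integrals over $T_x$, $\Omega \setminus T_x$, and $\R^d \setminus \Omega$. Since $v_\ell$ is affine on $T_x$, $v_\ell(x) - v_\ell(y) = -\nabla v_\ell|_{T_x}\cdot(y-x)$ is odd around $x$; integration over $B(x,\delta)$ cancels by symmetry, and the outer-shell contribution over $T_x \setminus B(x,\delta)$ is controlled by $\norm{\nabla v_\ell}_{L^\infty(T_x)}\,\delta^{1-2s}$ (for $s > 1/2$; bounded for $s < 1/2$).

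\textbf{Handling the interior skeleton.} For $\Omega \setminus T_x$, I would decompose $v_\ell = \tilde v + g$, where $\tilde v$ is the globally affine extension of $v_\ell|_{T_x}$. The $\tilde v$-part is handled as above. The key point is that $g$ vanishes on $T_x$ and, by the continuity of $v_\ell$ across element interfaces, also on every face of $T_x$; hence on each neighbor $T' \in \T_\ell(T_x)$ the affine function $g$ is proportional to the signed distance to the common face, giving the crucial bound $|g(y)| \lesssim \dist(y, \partial T_x)$. This extra factor gains one power of $|x-y|$ against the singular kernel $|x-y|^{-d-2s}$ and yields again a $\delta^{1-2s}$-type estimate for the contribution of the first patch. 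For elements farther than one layer away from $T_x$, the kernel is pointwise bounded in terms of the mesh size and the diameter of $\Omega$, and the contribution from $\R^d \setminus \Omega$ uses $v_\ell \equiv 0$ outside $\Omega$ together with the zero boundary trace estimate $|v_\ell(x)| \lesssim \dist(x, \partial\Omega) \le \omega_\ell(x)$ to yield a contribution of the same order.

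\textbf{Main obstacle.} The delicate point is extracting the improved exponent $1 - 2s$ instead of the naive $-2s$ that a crude bound on the singular kernel would give; the integral $\int_0^c r^{2\beta - 4s}\,dr$ from the naive bound would diverge for any $\beta \le 2s - 1/2$, which is strictly worse than the hypothesis. Getting the right exponent hinges on the decomposition $v_\ell = \tilde v + g$ together with the observation that $g$ vanishes on all interfaces of $T_x$, so that $g$ contributes only first-order (in distance) near the skeleton of $T_x$. This cancellation — essentially the $C^0$-continuity of $v_\ell$ across faces — is what encodes the correct $1-2s$ blow-up and ultimately yields the sharp integrability threshold $\beta > 2s - 3/2$.
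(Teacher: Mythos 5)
Your proposal is correct and follows essentially the same two-step strategy as the paper: first derive the pointwise bound $|(-\Delta)^s v_\ell(x)| \lesssim 1 + \dist(x,\partial T_x)^{1-2s}$ by splitting the principal-value integral at radius $\dist(x,\partial T_x)$ and exploiting the odd cancellation of the affine part over the inner ball, then integrate $\omega_\ell^{2\beta}$ in tubular coordinates around the skeleton to arrive at the condition $\beta > 2s-3/2$. The one place you take a more elaborate route is the far contribution: the paper obtains the exponent $1-2s$ in a single stroke by applying the global Lipschitz estimate $|v_\ell(x)-v_\ell(y)| \le \norm{v_\ell}_{W^{1,\infty}(\Omega)}\,|x-y|$, which holds for all $x,y\in\R^d$ precisely because a continuous, compactly supported, piecewise-affine function is globally Lipschitz, and then simply integrates $r^{-2s}$ from $\dist(x,\partial T_x)$ outwards. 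Your decomposition $v_\ell = \tilde v + g$ with $|g(y)|\lesssim\dist(y,\partial T_x)$ is correct but adds no strength over this: your bound on $g$ is implied by, and after insertion into the kernel reproduces, the global Lipschitz bound, so your \emph{Main obstacle} paragraph overstates its necessity. You are right that $C^0$-continuity across faces is the crucial structural fact that upgrades $-2s$ to $1-2s$, but it enters simply by making $v_\ell$ globally Lipschitz, not through the separate near-skeleton analysis of $g$. One small slip worth fixing: you wrote $\dist(x,\partial\Omega)\le\omega_\ell(x)$, but since the mesh skeleton contains $\partial\Omega$ the inequality runs the other way, $\omega_\ell(x)\le\dist(x,\partial\Omega)$; this does not harm the argument, because for $s>1/2$ the estimate you actually need, namely $\dist(x,\partial\Omega)^{1-2s}\le\omega_\ell(x)^{1-2s}$, follows from the corrected inequality since $1-2s<0$.
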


To control the discretization error of~\eqref{eq:galerkin}, we study the weighted residual error estimator 
\begin{align}\label{eq:estimator}
 \eta_\ell(v_\ell) := \norm{\widetilde h_\ell^{s}\big(f-(-\Delta)^s v_\ell\big)}_{L^2(\Omega)},
 \,\, \text{where} \,\,\,
 \widetilde h_\ell^{s} :=
 \begin{cases}
   h_\ell^{s}\quad& \text{for $0 < s \le 1/2$},\\
   h_\ell^{s-\beta}\omega_\ell^\beta & \text{for $1/2 < s < 1$ and $\beta := s - 1/2$.}\\
 \end{cases}
\end{align}
Since the $L^2$-norm is local, the error estimator can be written as a sum of local contributions
\begin{align}\label{eq:indicators}
 \eta_\ell(v_\ell) = \left(\sum_{T\in\T_\ell} \eta_\ell(T,v_\ell)^2 \right)^{1/2}\!, 
 \quad \text{where} \quad
 \eta_\ell(T,v_\ell) := \norm{\widetilde h_\ell^{s}\big(f-(-\Delta)^s v_\ell\big)}_{L^2(T)}.
\end{align}
If $v_\ell = u_\ell$ is the solution of~\eqref{eq:galerkin}, we abbreviate $\eta_\ell := \eta_\ell(u_\ell)$ as well as $\eta_\ell(T) := \eta_\ell(T,u_\ell)$ for all $T \in \T_\ell$.

\begin{theorem}\label{theorem:reliable}
For $0 < s < 1$, the weighted residual error estimator~\eqref{eq:estimator} is reliable:  
\begin{align}\label{eq:reliable}
 \norm{u - u_\ell}_{\widetilde H^s(\Omega)}
 \le \Crel \, \eta_\ell.
\end{align}
Moreover, for $0 < s \leq 1/2$ with $s \neq 1/4$ and $u \in H^1_0(\Omega)$, the estimator is efficient in the sense that
\begin{align}\label{eq:efficient}
 \eta_\ell^2 
 \le \Ceff^2 \,\left(\norm{u - u_\ell}_{\widetilde{H}^s(\Omega)}^2 + 
 \sum_{T \in \T_\ell}h_\ell(T) \norm{u - u_\ell}_{H^{1/2+s}(\Omega^2_\ell(T))}^2 \right).
\end{align}
The constants $\Crel, \Ceff > 0$ depend only on $\Omega$, $d$, $s$, and the $\gamma$-shape regularity of $\TT_\ell$.
\end{theorem}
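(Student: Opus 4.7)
The proof of reliability follows the classical residual-estimator paradigm, adapted to the nonlocal, weighted setting. With $e := u - u_\ell$, I combine coercivity of $a$ on $\widetilde H^s(\Omega)$ and Galerkin orthogonality via a Scott--Zhang type quasi-interpolant $J_\ell \colon \widetilde H^s(\Omega) \to S^{1,1}_0(\T_\ell)$, which is locally $H^s$-stable, to obtain
\[
\norm{e}_{\widetilde H^s(\Omega)}^2 \lesssim a(e, e - J_\ell e) = \int_\Omega (f - (-\Delta)^s u_\ell)(e - J_\ell e)\,dx,
\]
where the second equality is justified once the residual $f - (-\Delta)^s u_\ell$ is recognized as an $L^2(\Omega, \widetilde h_\ell^{2s}\,dx)$-function via Lemma~\ref{lem:blowup}. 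A weighted Cauchy--Schwarz splitting by $\widetilde h_\ell^{s}\cdot \widetilde h_\ell^{-s}$ then produces the bound $\eta_\ell\cdot\norm{\widetilde h_\ell^{-s}(e - J_\ell e)}_{L^2(\Omega)}$, so reliability reduces to the approximation estimate
$\norm{\widetilde h_\ell^{-s}(v - J_\ell v)}_{L^2(\Omega)} \lesssim \norm{v}_{\widetilde H^s(\Omega)}$ for all $v \in \widetilde H^s(\Omega)$.

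For $0 < s \le 1/2$ one has $\widetilde h_\ell^{-s} = h_\ell^{-s}$, and the required bound is the classical elementwise fractional Scott--Zhang estimate $h_\ell(T)^{-s}\norm{v - J_\ell v}_{L^2(T)} \lesssim \abs{v}_{H^s(\patch_\ell(T))}$ summed over $T$, with zero-extension absorbing the boundary elements. For $1/2 < s < 1$, the weight $\widetilde h_\ell^{-s} = h_\ell^{-1/2}\omega_\ell^{1/2-s}$ is singular on the \emph{entire} mesh skeleton, and this is the main technical obstacle. On each $T$, $\omega_\ell|_T = \dist(\cdot,\partial T)$, so a Hardy-type inequality on $T$ converts the weight $\omega_\ell^{-\beta}$ (with $\beta = s-1/2$) into the fractional seminorm $\abs{\cdot}_{H^\beta(T)}$ of $v - J_\ell v$; combining with the approximation property $\abs{v - J_\ell v}_{H^\beta(T)} \lesssim h_\ell(T)^{s-\beta}\abs{v}_{H^s(\patch_\ell(T))}$ closes the argument. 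The $\norm{v/\rho^s}_{L^2(\Omega)}$-term built into the $\widetilde H^s$-norm is needed near $\partial\Omega$ where $\omega_\ell \approx \rho$. The exponent $\beta = s - 1/2$ is precisely what makes Hardy's inequality and the approximation estimate balance, which explains why no purely $h$-based weight works in the range $s > 1/2$.

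For efficiency under the extra assumption $s \neq 1/4$ and $u \in H^1_0(\Omega)$, I write $f - (-\Delta)^s u_\ell = (-\Delta)^s(u - u_\ell)$ and localize via a cutoff $\chi_T$ with $\chi_T \equiv 1$ on $\patch_\ell(T)$ and $\supp\chi_T \subset \patch_\ell^2(T)$. The near-field $\chi_T(u-u_\ell)$ has $H^{1/2+s}$-regularity locally since $u \in H^1_0$ and $u_\ell|_T$ is affine, so the mapping property $(-\Delta)^s \colon H^{1/2+s}(\R^d) \to H^{1/2-s}(\R^d)$ together with $\norm{h_\ell^s w}_{L^2(T)} \lesssim h_\ell(T)^{1/2}\norm{w}_{H^{s-1/2}(T)}$ yields the $h_\ell(T)\norm{u-u_\ell}_{H^{1/2+s}(\patch_\ell^2(T))}^2$ contribution in \eqref{eq:efficient}. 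The far-field $(1-\chi_T)(u-u_\ell)$ is controlled using smoothness of the kernel $\abs{x-y}^{-d-2s}$ for $y$ bounded away from $T$, producing the $\norm{u-u_\ell}_{\widetilde H^s(\Omega)}^2$ term. The exclusion $s = 1/4$ reflects a critical exponent in the interpolation identity $[L^2,H^1]_{1/2+s}$ entering the localization step; generic $s$ avoids a potential logarithmic loss there.
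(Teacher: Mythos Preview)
Your reliability argument is correct and is essentially the paper's: the key estimate $\|\widetilde h_\ell^{-s}(1-J_\ell)v\|_{L^2(\Omega)}\lesssim\|v\|_{\widetilde H^s(\Omega)}$, obtained for $s>1/2$ via an elementwise Hardy inequality and scaling, is exactly Lemma~\ref{lemma:sz}(\ref{item:lemma:sz-iii}); the paper merely routes through discrete reliability~\eqref{axiom:reliability} plus a density argument rather than working with $e$ directly.

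Your efficiency argument, however, has a genuine gap. The claimed local bound $\|h_\ell^s w\|_{L^2(T)}\lesssim h_\ell(T)^{1/2}\|w\|_{H^{s-1/2}(T)}$ is false for general $w$ (and remains false with the sign corrected to $H^{1/2-s}$): either reading is an inverse-type inequality, and $w=(-\Delta)^s(\chi_T(u-u_\ell))$ is \emph{not} a finite-element function. Using only the mapping property on the near field you get at best $h_\ell(T)^{2s}\|u-u_\ell\|_{H^{2s}(\Omega_\ell^2(T))}^2$, and there is no way to trade this for $h_\ell(T)\|u-u_\ell\|_{H^{1/2+s}(\Omega_\ell^2(T))}^2$ on the non-discrete function $u-u_\ell$. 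The missing idea is a Scott--Zhang splitting $u-u_\ell=J_\bullet(u-u_\ell)+(1-J_\bullet)(u-u_\ell)$: on the discrete part the inverse estimate~\eqref{eq:invest} applies and produces $\|u-u_\ell\|_{\widetilde H^s(\Omega)}$; on the remainder one uses~\eqref{eq1:invest}, and \emph{then} the approximation property of $1-J_\bullet$ (Lemma~\ref{lemma:sz}(\ref{item:lemma:sz-ii}), interpolated) supplies the extra factor $h_\ell(T)^{1-2s}$ needed to reach the stated form. Finally, the exclusion $s=1/4$ is not an interpolation artifact: it comes from the Hardy inequality $\|\cdot\|_{\widetilde H^{2s}(\Omega)}\lesssim\|\cdot\|_{H^{2s}(\Omega)}$, which fails at the critical exponent $2s=1/2$ and is needed in~\eqref{eq:nearfieldmapping}.
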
%
\begin{remark}
The efficiency result \eqref{eq:efficient} is weaker than classical efficiency, where the sum on the right-hand side 
does not appear. However, as this term scales with the correct powers of the mesh width, there is not 
a large gap between \eqref{eq:efficient} and classical efficiency, as for discrete functions 
$u$ the right-hand side of \eqref{eq:efficient} can be bounded by the energy norm with an inverse estimate.
\eremk
\end{remark}

\subsection{Adaptive mesh refinement}

Based on the local contributions of the weighted residual error estimator~\eqref{eq:estimator}, we consider the following standard approach for adaptive mesh refinement of the type \texttt{SOLVE} -- \texttt{ESTIMATE} -- \texttt{MARK} -- \texttt{REFINE}, where the D\"orfler criterion~\eqref{eq:doerfler} from~\cite{Doer96} is used to select elements for refinement.

\begin{algorithm}\label{algorithm}
Input: Initial triangulation $\T_0$, adaptivity parameters $0 < \theta \le 1$, $\Cmark \ge 1$.
\newline
For all $\ell = 0, 1, 2, \dots$, iterate the following steps~(\ref{item:alg-i})--(\ref{item:alg-iv}):
\begin{enumerate}[(i)]
\item 
\label{item:alg-i}
Compute the solution $u_\ell \in S^{1,1}_0(\T_\ell)$ of~\eqref{eq:galerkin}.
\item 
\label{item:alg-ii}
Compute refinement indicators $\eta_\ell(T) = \eta_\ell(T,u_\ell)$ from~\eqref{eq:indicators} for all $T \in \T_\ell$.
\item 
\label{item:alg-iii}
Determine a set $\MM_\ell \subseteq \T_\ell$ of, up to the multiplicative factor $\Cmark$, minimal cardinality such that
\begin{align}\label{eq:doerfler}
 \theta \sum_{T\in\T_\ell} \eta_\ell(T)^2 \le \sum_{T\in\MM_\ell} \eta_\ell(T)^2.
\end{align}
\item 
\label{item:alg-iv}
Generate the coarsest NVB refinement $\T_{\ell+1} := \refine(\T_\ell, \MM_\ell)$ of $\TT_\ell$ such that all marked elements $T \in \T_\ell$ have been bisected. 
\end{enumerate}
\end{algorithm}

Analogous to the works~\cite{stevenson07,ckns08,ffp14,CFPP14} for FEM and BEM, the following 
Theorem~\ref{theorem:algorithm} states linear convergence~\eqref{eq:linear} of Algorithm~\ref{algorithm} with optimal algebraic convergence rates~\eqref{eq:optimal}.

\begin{theorem}\label{theorem:algorithm}
Let $0 < \theta \le1$ and $1 \le \Cmark \le \infty$. Then, there exist constants $0 < \qlin < 1$ and $\Clin > 0$ 
such that the sequence $(u_\ell)_{\ell \in \N}$ generated by Algorithm~\ref{algorithm} satisfies 
\begin{align}\label{eq:linear}
 \eta_{\ell + n} \le \Clin \qlin^n \, \eta_\ell
 \quad \text{for all } \ell, n \in \N_0.
\end{align}
Moreover, if $0 < \theta \ll 1$ is sufficiently small and  $1 \le \Cmark <\infty$, 
and if the initial triangulation $\T_0$ satisfies the admissibility condition~\cite[Section~4]{stevenson08} for $d \ge 3$, 
then the error estimator converges with the best possible algebraic rate: 
For each $t > 0$, there exist $\copt$, $\Copt > 0$ such that
\begin{align}\label{eq:optimal}
 & \copt \, \mathbb{A}_t(u) \le 
 \sup_{\ell \in \N_0} (\#\T_\ell)^t \, \eta_\ell
 \le \Copt \, \mathbb{A}_t(u), 
\\
&
\mathbb{A}_t(u) := \sup_{N \in \N_0} N^t \, \min_{\T_\opt \in \{\T \in \refine(\T_0) \,:\, \#\T - \#\T_0 \le N\}} \eta_\opt, 
\end{align}
where $\refine(\T_0)$ is the (infinite) set of all NVB refinements of the initial triangulation $\T_0$ 
and $\eta_\opt$ is the weighted residual error estimator corresponding to the triangulation $\T_\opt$.
\end{theorem}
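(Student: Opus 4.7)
My plan is to invoke the abstract framework of \cite{CFPP14}, as already foreshadowed by the reference to Proposition~\ref{prop:axioms}. That framework reduces both the linear convergence~\eqref{eq:linear} and the optimality statement~\eqref{eq:optimal} to verifying four properties of the estimator: stability on non-refined elements (A1), contraction on refined elements (A2), reliability (A3), and general quasi-orthogonality or discrete reliability (A4). Reliability is already supplied by Theorem~\ref{theorem:reliable}, so the real work is to establish (A1), (A2), and (A4); the main new input, absent from the classical FEM analysis, is the inverse estimate of Theorem~\ref{theorem:invest}.

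For (A1), I would start from the reverse triangle inequality, which yields $|\eta_\ell(T,v_\ell)-\eta_\ell(T,w_\ell)|\le \|\widetilde h_\ell^{s}(-\Delta)^s(v_\ell-w_\ell)\|_{L^2(T)}$ for $v_\ell,w_\ell\in S^{1,1}_0(\TT_\ell)$. Summing the squares over a subset $\UU\subseteq\TT_\ell$ and applying Theorem~\ref{theorem:invest} directly gives the stability estimate with constant $\Cstab$ in terms of $\norm{v_\ell-w_\ell}_{\widetilde H^s(\Omega)}$. For (A2), I would use that newest-vertex bisection contracts element sizes by a factor of $2^{-1/d}$, so that on every son element of a refined $T$ one has $h_{\ell+1}\le 2^{-1/d}h_\ell$ pointwise; because $\omega_{\ell+1}\le\omega_\ell$ and the weight exponent $\beta=s-1/2$ is non-negative for $s\ge 1/2$, the same factor $\qred:=2^{-s/d}$ controls $\widetilde h_{\ell+1}^s/\widetilde h_\ell^s$ on refined elements. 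Combining with (A1) to pass from $u_\ell$ to $u_{\ell+1}$ yields the estimator reduction.

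The main obstacle is (A4). Since the bilinear form $a(\cdot,\cdot)$ is symmetric and coercive on $\widetilde H^s(\Omega)$, Galerkin orthogonality directly gives Pythagoras $\triplenorm{u-u_{\ell+1}}^2+\triplenorm{u_{\ell+1}-u_\ell}^2=\triplenorm{u-u_\ell}^2$ in the energy norm $\triplenorm{\cdot}^2:=a(\cdot,\cdot)$, which is equivalent to $\norm{\cdot}_{\widetilde H^s(\Omega)}$; this takes care of the quasi-orthogonality version of (A4) used in \cite{CFPP14} for linear convergence. The harder variant, needed for optimality, is discrete reliability, which for the nonlocal operator $(-\Delta)^s$ is subtle because one cannot argue locally. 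I would prove it by writing $u_{\ell+1}-u_\ell\in S^{1,1}_0(\TT_{\ell+1})$ and testing against its Scott--Zhang quasi-interpolant onto $S^{1,1}_0(\TT_\ell)$, which vanishes on all elements not touching the refined set $\TT_\ell\setminus\TT_{\ell+1}$; after exploiting Galerkin orthogonality, the remaining integral is localized to a patch around the refined elements. Standard approximation properties of the Scott--Zhang operator in the $\widetilde H^s$-norm, together with the weight $\widetilde h_\ell^s$, then yield $\norm{u_{\ell+1}-u_\ell}_{\widetilde H^s(\Omega)}\le \Cdrl\,\eta_\ell(\TT_\ell\setminus\TT_{\ell+1})$.

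With (A1)--(A4) in hand, the linear convergence~\eqref{eq:linear} follows from \cite[Thm.~4.1]{CFPP14} applied to $\eta_\ell=\eta_\ell(u_\ell)$, using D\"orfler marking in step~(\ref{item:alg-iii}) of Algorithm~\ref{algorithm}. Optimality~\eqref{eq:optimal} follows from \cite[Thm.~4.5]{CFPP14} once we additionally invoke the well-known mesh-closure and overlay estimates for NVB (for $d\ge 3$ requiring the admissibility assumption on $\TT_0$ from~\cite{stevenson08}), provided $\theta$ is chosen sufficiently small so that the comparison lemma of \cite{CFPP14} is applicable; the constant $\copt$ in the lower bound is standard and follows from the definition of $\mathbb{A}_t(u)$ together with $\#\TT_\ell-\#\TT_0\lesssim \#\TT_\ell$.
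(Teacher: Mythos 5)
Your proposal is correct and takes essentially the same route as the paper: Proposition~\ref{prop:axioms} establishes (A1)--(A4) in exactly the way you outline --- (A1) via the reverse triangle inequality plus the inverse estimate of Theorem~\ref{theorem:invest}, (A2) from the NVB size contraction $2^{-1/d}$ and pointwise monotonicity of the skeleton weight $\omega_\ell$, (A3) via the Scott--Zhang projection, and (A4) from symmetry and ellipticity of $a(\cdot,\cdot)$ --- and Theorem~\ref{theorem:algorithm} then follows from \cite[Theorem~4.1]{CFPP14} after noting that conformity together with (A1)--(A2) yields the estimator reduction. Two small imprecisions worth flagging: in the paper's labeling (A3) is \emph{discrete} reliability (plain reliability in Theorem~\ref{theorem:reliable} is deduced from it afterward, not used as an axiom), and the Scott--Zhang argument hinges not on the interpolant vanishing off the refined region but on the fact that the interpolation \emph{error} $(1-J_\coarse)v$ vanishes on all unrefined elements for $v \in S^{1,0}(\T_\fine)$, which requires the specific choice of averaging elements $T_z$ made in Lemma~\ref{lemma:sz}~(\ref{item:lemma:sz-iv}).
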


\subsection{Inverse estimates for the nonlocal operator $\boldsymbol{(-\Delta)^s}$}

A key piece in the axiomatic approach of \cite{CFPP14}, which generalizes ideas and 
techniques developed in \cite{FKMP13,gantumur13,part1,part2} for the convergence analysis 
of adaptive algorithms involving nonlocal operators, are inverse inequalities for the pertinent
operators. The following Theorem~\ref{theorem:invest} establishes this inverse estimate for the 
fractional Laplacian. 
We mention that the case $s = 1/2$ for the fractional Laplacian 
is closely related to the hyper-singular integral operator in the BEM~\cite{gantumur13,part2}, 
for which the appropriate inverse estimate is established in \cite{AFFKMP17}. 

\begin{theorem}\label{theorem:invest} 
\begin{enumerate}[(i)]
\item 
Let $0 < s \le 1/2$ with $s\neq 1/4$. Then, there holds for $v \in \widetilde H^{2s}(\Omega)$
\begin{align}\label{eq1:invest}
 \norm{h_\ell^{s} \, (-\Delta)^s v}_{L^2(\Omega)}
 \le \Cinv \, \Big( \norm{v}_{\widetilde H^s(\Omega)}^2 
 + \sum_{T\in\T_\ell }h_\ell(T)^{2s}\norm{ v}_{H^{2s}(\patch_\ell(T))}^2 \Big)^{1/2}.
\end{align}
\item 
For $0 < s < 1$ and all $v_\ell \in S^{1,1}_0(\T_\ell)$, there holds 
\begin{align}\label{eq:invest}
 \norm{\widetilde h_\ell^{s} \, (-\Delta)^s v_\ell}_{L^2(\Omega)} 
 \le \Cinv \, \norm{v_\ell}_{\widetilde H^s(\Omega)}.
\end{align}%
\end{enumerate}
The constant $\Cinv > 0$ depends only on $\Omega$, $d$, $s$, and the $\gamma$-shape regularity of $\T_\ell$. 
\end{theorem}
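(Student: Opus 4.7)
Part (i) calls for a localization-by-cutoff argument. For each $T \in \T_\ell$, I choose a smooth cutoff $\chi_T$ with $\chi_T \equiv 1$ on $T$, $\supp \chi_T \subset \patch_\ell(T)$, and $\|\nabla^k \chi_T\|_{L^\infty} \lesssim h_\ell(T)^{-k}$ for $k = 0, 1, 2$. On $T$, I split
$(-\Delta)^s v = (-\Delta)^s(\chi_T v) + (-\Delta)^s((1-\chi_T) v).$
The near part is controlled by the mapping property $(-\Delta)^s : H^{2s}(\R^d) \to L^2(\R^d)$ (immediate from the Fourier definition) combined with a product estimate for $\chi_T v$, producing the term $h_\ell(T)^{2s}\|v\|_{H^{2s}(\patch_\ell(T))}^2$ plus lower-order contributions that, after summation and interpolation, are absorbed into $\|v\|_{\widetilde H^s(\Omega)}^2$. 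The far part has a non-singular kernel on $T$ since $1-\chi_T$ vanishes there, so it can be estimated pointwise from the principal-value representation using a fractional Hardy/Schur argument, exploiting the bound $\|v/\rho^s\|_{L^2(\Omega)} \lesssim \|v\|_{\widetilde H^s(\Omega)}$ built into the $\widetilde H^s$-norm. Summation over $T$ via finite overlap of the patches then yields (i); the excluded exponent $s = 1/4$ is the critical value at which the underlying fractional Hardy inequality degenerates.

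For part (ii) with $0 < s \le 1/2$, I apply (i) to $v = v_\ell$. Since $v_\ell \in S^{1,1}_0(\T_\ell)$ is continuous piecewise affine and $2s \le 1$, we have $v_\ell \in H^{2s}(\patch_\ell(T))$, and a standard scaling/reference-element argument gives the local inverse estimate $\|v_\ell\|_{H^{2s}(\patch_\ell(T))} \lesssim h_\ell(T)^{-s}\|v_\ell\|_{H^s(\patch_\ell(T))}$. Finite overlap of patches then reduces the right-hand side of (i) to $\|v_\ell\|_{\widetilde H^s(\Omega)}^2$.

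For part (ii) with $1/2 < s < 1$, piecewise polynomials lack global $H^{2s}$-regularity and (i) is unavailable; the skeleton weight $\omega_\ell^\beta$ in $\widetilde h_\ell^s$ is essential to compensate for the blow-up of $(-\Delta)^s v_\ell$ near the mesh skeleton described in Lemma~\ref{lem:blowup}. Here I invoke the Caffarelli--Silvestre extension $U$ of $v_\ell$, using $(-\Delta)^s v_\ell = -d_s \lim_{\mathcal{Y}\to 0^+}\mathcal{Y}^\alpha \partial_\mathcal{Y} U$ and the energy equivalence $\|\nabla U\|_{L^2_\alpha(\R^d\times\R^+)}^2 \sim \|v_\ell\|_{\widetilde H^s(\Omega)}^2$. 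The calibrated weight $\widetilde h_\ell^s = h_\ell^{1/2}\omega_\ell^{s-1/2}$ is precisely what a weighted trace estimate for $\mathcal{Y}^\alpha \partial_\mathcal{Y} U$ demands; combining this with Caccioppoli-type estimates for the degenerate equation $\div(\mathcal{Y}^\alpha \nabla U) = 0$ localized in cylindrical neighborhoods of each element, together with the elementwise smoothness of $v_\ell$, I recover the bound in terms of $\|\nabla U\|_{L^2_\alpha}$ and hence $\|v_\ell\|_{\widetilde H^s(\Omega)}$.

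The main obstacle I foresee is the weighted trace/Caccioppoli step in the $s > 1/2$ case: one must balance the degenerate weight $\mathcal{Y}^\alpha$ against the skeleton weight $\omega_\ell^{s-1/2}$ so that the powers of $h_\ell$ match exactly, which requires a careful Whitney-type covering of $\Omega$ by strips whose thickness scales with the distance to the mesh skeleton. A secondary subtlety, in part (i), is making the far-field Hardy/Schur bound uniform up to (but excluding) the critical exponent $s = 1/4$.
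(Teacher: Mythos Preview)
Your overall near/far splitting is the right starting point, but you have the roles of the two main tools reversed relative to what actually works. In the paper, the Caffarelli--Silvestre extension together with a Caccioppoli estimate for tangential second derivatives of $U$ is the engine for the \emph{far-field}, and it is used uniformly for all $0<s<1$: one extends $u=(1-\chi_T)(v-c_T)+c_T$, whose trace vanishes on a neighborhood of $T$, so $U$ is smooth enough there for a multiplicative trace inequality plus Caccioppoli to yield $\sum_T h_\ell(T)^{2s}\|v_{\rm far}^T\|_{L^2(T)}^2 \lesssim \|\nabla U\|_{L^2_\alpha}^2 \lesssim \|v\|_{\widetilde H^s(\Omega)}^2$. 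Your proposed ``Hardy/Schur'' estimate for the far-field in part~(i), using only $\|v/\rho^s\|_{L^2}\lesssim\|v\|_{\widetilde H^s}$, does not control the genuinely nonlocal sum: the kernel $|x-y|^{-d-2s}$ with $|x-y|\gtrsim h_\ell(T)$ produces, after a naive Schur bound, a factor $h_\ell(T)^{-2s}$ that you cannot trade back for $H^s$-regularity of a general $v$, and the boundary weight $\rho^{-s}$ is irrelevant to interactions between interior elements. This is precisely why the extension (or, in the BEM analogues, potential-theoretic Caccioppoli) is needed.

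Conversely, for $1/2<s<1$ in part~(ii), the extension of $v_\ell$ itself does \emph{not} have enough interior regularity near $\mathcal{Y}=0$ along the skeleton to run your weighted trace/Caccioppoli argument directly; the singularity of $(-\Delta)^s v_\ell$ at the skeleton is a near-field effect. The paper handles the near-field for $s>1/2$ by direct computation with the integral formula: since $v_\ell|_T\in\mathcal{P}^1(T)$, the principal value over $B_{\dist(x,\partial T)}(x)$ vanishes by symmetry, and the remaining integral is controlled using Lipschitz continuity of $v_\ell$ together with the weight $\omega_\ell^\beta$, giving exactly $h_\ell(T)^{2\beta-2s}\|v_\ell\|_{H^s(\Omega_\ell(T))}^2$. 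Finally, your attribution of the exclusion $s\neq 1/4$ to the far-field Hardy step is misplaced: it arises in the near-field, from the Hardy inequality $\|\chi_T(v-c_T)\|_{\widetilde H^{2s}(\Omega)}\lesssim\|\chi_T(v-c_T)\|_{H^{2s}(\Omega)}$, which degenerates at $2s=1/2$. You also omit the subtraction of the patchwise constant $c_T$, which is what makes the local Poincar\'e estimates go through.
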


\section{Proof of Theorem~\ref{theorem:reliable} and Theorem~\ref{theorem:algorithm}}
\label{sec:algorithm}

\subsection{Axioms of adaptivity}

The following Proposition~\ref{prop:axioms} yields validity of the \emph{axioms of adaptivity} from~\cite{CFPP14}, where we recall that the present conforming discretization guarantees that $S^{1,1}_0(\T_\bullet) \subseteq S^{1,1}_0(\T_\circ)$ if the triangulation $\TT_\fine$ is a refinement of $\TT_\coarse$ (i.e., $\TT_\circ$ is obtained from $\TT_\bullet$ by finitely many steps of newest vertex bisection).

\begin{proposition}\label{prop:axioms}
There exist constants $\Cstab$, $\Cdrl>0$ and $0 < \qred < 1$ depending solely on $\Omega$, $d$, $s$, and the $\gamma$-shape regularity of the initial triangulation $\TT_0$ such that the following properties 
(\ref{axiom:stability})--(\ref{axiom:quasiorthogonality}) hold  for any 
 refinement $\TT_\coarse$ of the initial triangulation $\TT_0$ and any refinement $\TT_\fine$ of $\TT_\coarse$: 
\begin{enumerate}
\renewcommand{\theenumi}{A\arabic{enumi}}
\bf
\item[(A1)] Stability:\refstepcounter{enumi}\label{axiom:stability}
\rm
For all $v_\fine \in S^{1,1}_0(\T_\fine), w_\coarse \in S^{1,1}_0(\T_\coarse)$ and any $\UU_\coarse \subseteq \T_\coarse \cap \T_\fine$, 
there holds 
\begin{align*}
\left| \left( \sum_{T \in \UU_\coarse} \eta_\fine(T,v_\fine)^2 \right)^{1/2} - \left(\sum_{T \in \UU_\coarse} \eta_\coarse(T,w_\coarse)^2 \right)^{1/2} \right|
\le \Cstab \, \norm{v_\fine - w_\coarse}_{\widetilde{H}^s(\Omega)}.
\end{align*}

\bf
\item[(A2)] Reduction:\refstepcounter{enumi}\label{axiom:reduction}
\rm
For all $v_\coarse \in S^{1,1}(\TT_\coarse)$, there holds 
\begin{align*}
 \left( \sum_{T \in \TT_\fine \backslash \TT_\coarse} \eta_\fine(T,v_\coarse)^2 \right)^{1/2} 
 \le \qred \, \left( \sum_{T \in \TT_\coarse \backslash \TT_\fine} \eta_\coarse(T,v_\coarse)^2 \right)^{1/2}.
\end{align*}

\bf
\item[(A3)] Discrete reliability:\refstepcounter{enumi}\label{axiom:reliability}
\rm
The Galerkin approximations $u_\coarse \in S^{1,1}_0(\TT_\coarse)$ and $u_\fine \in S^{1,1}_0(\TT_\fine)$ satisfy that
\begin{align*}
 \norm{u_\fine - u_\coarse}_{\widetilde{H}^s(\Omega)} 
 \le \Cdrl \, \left( \sum_{T \in \T_\coarse \backslash \T_\fine} \eta_\coarse(T,u_\coarse)^2 \right)^{1/2}.
\end{align*}

\bf 
\item[(A4)] Quasi-orthogonality:\refstepcounter{enumi}\label{axiom:quasiorthogonality}
\rm 
For all $\ell$, $N>0$ and $\varepsilon>0$, it holds that
\begin{align*}
\sum_{k = \ell}^N \left(\norm{u_{k+1}-u_k}_{\widetilde{H}^s(\Omega)}^2 - \varepsilon \eta_k^2\right) \leq C_{\rm orth}(\varepsilon) \eta_\ell^2.
\end{align*}
\end{enumerate}
\end{proposition}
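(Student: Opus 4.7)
The plan is to verify the four axioms separately. Properties (A1), (A2), and (A4) follow relatively directly from Theorem~\ref{theorem:invest}, the NVB volume-halving property, and Galerkin orthogonality, respectively, while (A3) is the main technical step and requires a Scott--Zhang-type quasi-interpolant with a suitable weighted approximation estimate.

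For (A1), the first observation is that on any element $T \in \T_\coarse \cap \T_\fine$ the weights agree, $\widetilde h_\fine^s|_T = \widetilde h_\coarse^s|_T$. Indeed, $h_\ell$ depends only on $|T|$, and for $x$ in the interior of such a $T$, any point of the skeleton of either mesh that lies outside $\overline T$ is reached from $x$ only by crossing $\partial T$, so the infimum in \eqref{eq:w_ell} is attained on $\partial T$ for both meshes. Applying the reverse triangle inequality first in $L^2$ elementwise and then in $\ell^2$ over $\UU_\coarse$ reduces the claim to bounding $\norm{\widetilde h_\fine^s (-\Delta)^s (v_\fine - w_\coarse)}_{L^2(\cup \UU_\coarse)}$, which is controlled by Theorem~\ref{theorem:invest}(ii) applied with the fine mesh to $v_\fine - w_\coarse \in S^{1,1}_0(\T_\fine)$. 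For (A2), each $T \in \T_\fine \setminus \T_\coarse$ is strictly contained in some $T' \in \T_\coarse \setminus \T_\fine$ with $|T| \le |T'|/2$ by NVB, while $\omega_\fine \le \omega_\coarse$ pointwise because the fine skeleton contains the coarse one. Hence $\widetilde h_\fine^s|_T \le q \, \widetilde h_\coarse^s|_{T'}$ pointwise, with $q = 2^{-1/(2d)}$ for $s > 1/2$ (using $s-\beta = 1/2$) and $q = 2^{-s/d}$ for $s \le 1/2$. Since the residual $f - (-\Delta)^s v_\coarse$ does not depend on the mesh, squaring, summing over the children of each $T'$, and then over $T' \in \T_\coarse \setminus \T_\fine$ gives (A2) with $\qred := q < 1$.

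For (A4), I would use that $a(\cdot,\cdot)$ is symmetric, coercive, and bounded on $\widetilde H^s(\Omega)$, so the energy norm $|||v||| := a(v,v)^{1/2}$ is equivalent to $\norm{\cdot}_{\widetilde H^s(\Omega)}$. Nestedness of the discrete spaces together with Galerkin orthogonality yields the Pythagorean identity $|||u - u_k|||^2 = |||u - u_{k+1}|||^2 + |||u_{k+1} - u_k|||^2$. Telescoping and applying reliability (Theorem~\ref{theorem:reliable}) gives
\begin{equation*}
\sum_{k=\ell}^N \norm{u_{k+1}-u_k}_{\widetilde H^s(\Omega)}^2 \le C \norm{u - u_\ell}_{\widetilde H^s(\Omega)}^2 \le C \Crel^2 \, \eta_\ell^2,
\end{equation*}
so (A4) in fact holds with $C_{\rm orth}(\varepsilon)$ independent of $\varepsilon$.

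The main obstacle is (A3). Here I would set $e := u_\fine - u_\coarse \in S^{1,1}_0(\T_\fine)$, use coercivity to pass to $\norm{e}_{\widetilde H^s(\Omega)}^2 \lesssim a(e,e)$, and exploit Galerkin orthogonality $a(e, I_\coarse e) = 0$ for any $I_\coarse e \in S^{1,1}_0(\T_\coarse)$ to write $a(e,e) = a(e, e - I_\coarse e) = \int_\Omega (f - (-\Delta)^s u_\coarse)(e - I_\coarse e)\,dx$. A weighted Cauchy--Schwarz then produces the bound $\norm{\widetilde h_\coarse^s (f - (-\Delta)^s u_\coarse)}_{L^2(\supp(e-I_\coarse e))} \cdot \norm{\widetilde h_\coarse^{-s}(e - I_\coarse e)}_{L^2(\Omega)}$. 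The hard part is constructing a Scott--Zhang-type $I_\coarse$ with two properties: (i) $\supp(e - I_\coarse e) \subseteq \bigcup_{T \in \T_\coarse \setminus \T_\fine} \patch_\coarse(T)$, so the first factor is controlled by $\eta_\coarse(\T_\coarse \setminus \T_\fine, u_\coarse)$ up to a bounded-overlap constant from shape regularity; and (ii) the weighted approximation estimate $\norm{\widetilde h_\coarse^{-s}(e - I_\coarse e)}_{L^2(\Omega)} \lesssim \norm{e}_{\widetilde H^s(\Omega)}$ holds. For $0 < s \le 1/2$ this reduces to a classical weighted Scott--Zhang bound. For $1/2 < s < 1$ the weight $\widetilde h_\coarse^{-s} = h_\coarse^{-(s-\beta)}\omega_\coarse^{-\beta}$ is singular on the coarse skeleton, and I would handle this by choosing $I_\coarse$ that preserves nodal values on the skeleton of the coarse mesh in the refined region, so that $e - I_\coarse e$ vanishes on enough of $\partial T$ to compensate the blow-up of $\omega_\coarse^{-\beta}$ via a Hardy-type estimate on each refined coarse element. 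Global control of the $\rho^{-s}$ contribution in $\norm{\cdot}_{\widetilde H^s(\Omega)}$ near $\partial \Omega$ is then obtained from the fact that $e$ vanishes on $\Omega^c$.
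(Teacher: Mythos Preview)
Your plan matches the paper's proof closely for (A1), (A2), and (A4), and the overall structure for (A3) --- coercivity, Galerkin orthogonality, residual representation, weighted Cauchy--Schwarz, Scott--Zhang localization --- is exactly the paper's route. Two points where the paper sharpens what you sketch:

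\emph{Localization in (A3).} You aim for $\supp(e-I_\coarse e)\subseteq\bigcup_{T\in\T_\coarse\setminus\T_\fine}\patch_\coarse(T)$, which would only yield the estimator over a patch of the refined set (still sufficient for the CFPP14 framework, but not the statement of (A3) as written). The paper obtains the sharper $\supp((1-J_\coarse)e)\subseteq\bigcup_{T\in\T_\coarse\setminus\T_\fine}\overline T$ by a specific choice in the Scott--Zhang construction: for every node $z$ lying in some $T\in\T_\coarse\cap\T_\fine$, the averaging element $T_z$ is taken from $\T_\coarse\cap\T_\fine$. Since $e|_{T_z}$ is then affine, $J_\coarse e$ reproduces $e$ on every such $T$ exactly, and no patch enlargement is needed.

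\emph{Weighted approximation for $1/2<s<1$.} This is where your proposal is shakier. You suggest arranging that $e-I_\coarse e$ vanishes on enough of the coarse skeleton to invoke a Hardy inequality with zero boundary trace. But $e-I_\coarse e$ is only piecewise affine on the \emph{fine} mesh inside a refined coarse element $T$, so matching coarse nodal values does not make it vanish on $\partial T$; the mechanism you describe does not deliver the needed trace condition. The paper avoids this entirely by observing that $\beta=s-1/2\in(0,1/2)$ is \emph{subcritical}, so the Hardy-type bound
\[
\norm{\omega_\coarse^{-\beta}w}_{L^2(T)}\lesssim h_\coarse(T)^{-\beta}\norm{w}_{H^\beta(T)}
\]
holds for \emph{all} $w\in H^\beta(T)$ (Grisvard, Thm.~1.4.4.3), with no vanishing required. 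Scaling and combining with the standard Scott--Zhang bound $\norm{h_\coarse^{-s}(1-J_\coarse)v}_{L^2(\Omega)}\lesssim\norm{v}_{\widetilde H^s(\Omega)}$ plus $\widetilde H^s$-stability of $J_\coarse$ then gives $\norm{\widetilde h_\coarse^{-s}(1-J_\coarse)v}_{L^2(\Omega)}\lesssim\norm{v}_{\widetilde H^s(\Omega)}$ directly. This is both simpler and correct; your route would need a different justification.
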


\begin{proof}[Proof of~\eqref{axiom:quasiorthogonality}]
The quasi-orthogonality 
follows directly from the fact that the bilinear form $a(\cdot,\cdot)$ of the fractional Laplacian~\eqref{eq:weakform} 
is bilinear, elliptic, and symmetric; see~\cite[Section~3.6]{CFPP14} for details. 
\end{proof}

\begin{proof}[Proof of~\eqref{axiom:stability}]
Let $\omega := \interior \big( \bigcup_{T \in \UU_\bullet} \overline{T} \big) \subseteq \Omega$. With Theorem~\ref{theorem:invest}, we obtain that 
\begin{align*}
 \left| \left( \sum_{T \in \UU_\coarse} \eta_\fine(T,v_\fine)^2 \right)^{1/2} 
 	\!\!\!- \left(\sum_{T \in \UU_\coarse} \eta_\coarse(T,w_\coarse)^2 \right)^{1/2} \right|
 &= \left| \norm{\widetilde h^s_\coarse \big( f - (-\Delta)^s v_\fine \big)}_{L^2(\omega)}
 	-  \norm{\widetilde h^s_\coarse \big( f - (-\Delta)^s w_\coarse \big)}_{L^2(\omega)} \right|
 \\
 &\le \norm{\widetilde h^s_\coarse (-\Delta)^s (v_\fine - w_\coarse) }_{L^2(\omega)}
 \reff{eq:invest}\lesssim \norm{v_\fine - w_\coarse}_{\widetilde H^s(\Omega)}.
\end{align*}
This proves~\eqref{axiom:stability} with $\Cstab = \Cinv$.
\end{proof}

\begin{proof}[Proof of~\eqref{axiom:reduction}]
Bisection ensures that $|T'| \le |T|/2$ for all $T \in \T_\coarse \backslash \T_\fine$ 
and its children/descendants $T' \in \T_\fine \backslash \T_\coarse$ with $T' \subset T$.
Note that $\omega := \bigcup_{T' \in \TT_\fine \backslash \TT_\coarse} \overline{T'} = \bigcup_{T \in \TT_\coarse \backslash \TT_\fine} \overline{T}$.
For $0 < s \le 1/2$, this proves~\eqref{axiom:reduction} with $\qred = 2^{-s/d}$, since
\begin{align*}
 &\left( \sum_{T \in \TT_\fine \backslash \TT_\coarse} \eta_\fine(T,v_\coarse)^2 \right)^{1/2}
 = \left( \sum_{T \in \TT_\fine \backslash \TT_\coarse} |T|^{2s/d} \, \norm{\big(f-(-\Delta)^s v_\coarse\big)}_{L^2(T)}^2 \right)^{1/2}
 \\& \qquad \qquad
 \le 2^{-s/d} \, \left( \sum_{T \in \TT_\coarse \backslash \TT_\fine} |T|^{2s/d} \, \norm{\big(f-(-\Delta)^s v_\coarse\big)}_{L^2(T)}^2 \right)^{1/2}
 = 2^{-s/d}  \left( \sum_{T \in \TT_\coarse \backslash \TT_\fine} \eta_\coarse(T,v_\coarse)^2 \right)^{1/2}.
\end{align*}
For $1/2 < s < 1$, we note that $0 < \beta = s - 1/2 < s$ and $s - \beta > 0$. Moreover, we have 
pointwise $\omega_\fine(x) \le \omega_\coarse(x)$. Hence, it follows that 
\begin{align*}
 h_\fine^{s-\beta} \omega_\fine^\beta \le 2^{-(s-\beta)/d} \, h_\coarse^{s-\beta} \omega_\coarse^\beta
 \quad \text{pointwise on } \omega \subseteq \Omega.
\end{align*}
Arguing as before, we prove~\eqref{axiom:reduction} with $\qred = 2^{-(s-\beta)/d}$. This concludes the proof.
\end{proof}

The proof of discrete reliability~\eqref{axiom:reliability} relies on the Scott--Zhang projection~\cite{ScottZhang} 
for quasi-interpolation in $\widetilde{H}^s(\Omega)$. 
While the original work~\cite{ScottZhang} is concerned with the integer-order Sobolev space $H^1(\Omega)$, 
the approach is generalized in~\cite{affkp15} to fractional-order Sobolev spaces 
$\widetilde{H}^s(\Omega)$ for $0 \le s \le 1$. 
Since the precise construction will matter, we briefly sketch it: 
Let $\NN_\coarse$ be the set of all nodes of $\T_\coarse$ and let 
$\NN_\coarse^{\rm int}$ be the set of nodes of $\T_\coarse$, which lie \emph{inside} $\Omega$ (and not on the boundary $\Gamma$). 
For each element $T \in \T_\coarse$, 
let $\{\phi^\star_{1,T},\ldots,\phi^\star_{d,T}\} \subset \mathcal{P}^1(T)$ 
be the dual basis for the nodal basis of $\mathcal{P}^1(T)$ with 
respect to $(\cdot,\cdot)_{L^2(T)}$. these functions may be viewed as 
elements of $S^{1,0}(\T_\coarse)$ by zero extension outside $T$. 
For each $z \in \NN_\coarse$ choose an \emph{arbitrary} element 
$T_z \in \T_\coarse$ with $z \in \overline{T_z}$. (This freedom will be 
exploited later on.) Let $\phi_z \in S^{1,1}(\T_\coarse)$ 
denote the hat function associated with $z$. 
For the element $T_z$, 
let $\phi^\star_z \in \{\phi^\star_{1,T_z},\ldots,\phi^\star_{d,T_z}\} \subset \mathcal{P}^1(T_z)$ 
be such that $\int_{T_z} \phi^\star_z \phi_{z'} \, dx = \delta_{zz'}$ 
for all $z' \in \NN_\coarse$. Then, the Scott--Zhang projector $J_\coarse : L^2(\Omega) \to S^{1,1}_0(\TT_\coarse)$ is defined by
\begin{align*}
 J_\coarse v := \sum_{z \in \NN_\coarse^{\rm int}} \left( \int_{T_z} \phi^\star_z v \, dx \right) \, \phi_z.
\end{align*} 
Since the summation is over the interior nodes $\NN_\coarse^{\rm int}$, we have 
$J_\coarse v \in S^{1,1}_0(\T_\coarse)$. 
The following lemma collects some of its properties:

\begin{lemma}\label{lemma:sz}
For $0 < s < 1$, there holds 
for some constant $\Csz > 0$ depending only on $s$ and the $\gamma$-shape regularity of $\TT_\bullet$ and $\Omega$: 
\begin{enumerate}[(i)]
\item 
\label{item:lemma:sz-i}
$J_\bullet V_\bullet = V_\bullet$ \quad as well as \quad $\norm{J_\bullet v}_{\widetilde{H}^s(\Omega)} \le \Csz \, \norm{v}_{\widetilde{H}^s(\Omega)}$ \quad for all $V_\bullet \in S^{1,1}_0(\T_\bullet)$ and all $v \in \widetilde{H}^s(\Omega)$.
\item
\label{item:lemma:sz-ii}
$h_\bullet(T)^{-1} \, \norm{(1-J_\bullet) v}_{L^2(T)} + \norm{\nabla(1-J_\bullet) v}_{L^2(T)} \le \Csz \, \norm{\nabla v}_{L^2(\Omega_\bullet(T))}$
\quad for all $v \in H^1_0(\Omega)$.
\item
\label{item:lemma:sz-iii}
$\norm{\widetilde h_\bullet^{-s} \, (1-J_\bullet) v}_{L^2(\Omega)} \le \Csz \, \norm{v}_{\widetilde{H}^s(\Omega)}$ \quad for all $v \in \widetilde{H}^s(\Omega)$.
\item 
\label{item:lemma:sz-iv}
For each refinement $\T_\fine$ of $\T_\coarse$, the Scott-Zhang operator $J_\coarse$ can be chosen such that 
additionally 
\begin{align}
\label{eq:property-SZ}
 (1-J_\coarse) v = 0
 \quad \text{on } \omega := \bigcup_{T \in \T_\coarse \cap \T_\fine} \overline{T} \quad
  \text{for all } v \in S^{1,0}(\T_\fine). 
\end{align}
\end{enumerate}
\end{lemma}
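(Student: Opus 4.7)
My plan is to piece together (i)--(iv) from classical Scott--Zhang theory and its fractional-order extension in \cite{affkp15}, the genuinely new ingredient being the weighted bound in (iii) when $s>1/2$. For (i), the projection identity $J_\bullet V_\bullet = V_\bullet$ for $V_\bullet\in S^{1,1}_0(\T_\bullet)$ follows from the biorthogonality $\int_{T_z}\phi^\star_z\phi_{z'}\,dx=\delta_{zz'}$: since $V_\bullet|_{T_z}$ is affine, we get $\int_{T_z}\phi^\star_z V_\bullet\,dx = V_\bullet(z)$, and boundary nodes contribute nothing because $V_\bullet(z)=0$ there. The $\widetilde H^s$-stability is precisely what is proved in \cite{affkp15}; the crucial point is that $J_\bullet$ preserves vanishing boundary traces, so that the Hardy part $\norm{v/\rho^s}_{L^2(\Omega)}$ of the $\widetilde H^s$-norm is controlled. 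Part (ii) is the classical local Scott--Zhang estimate: the dual basis satisfies $\norm{\phi^\star_z}_{L^2(T_z)} \lesssim |T_z|^{-1/2}$ by scaling, giving local $L^2$-stability on the patch $\patch_\bullet(T)$, and polynomial reproduction combined with Bramble--Hilbert delivers the $H^1$-approximation bound.

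For (iii), the case $0<s\le 1/2$ is standard and follows by interpolating the bounds of (ii) between $L^2$ and $H^1$. The non-trivial case is $s>1/2$, where $\widetilde h_\bullet^{-s} = h_\bullet^{-(s-\beta)}\omega_\bullet^{-\beta}$ with $\beta := s-1/2\in(0,1/2)$. On each element $T$, shape regularity gives $\omega_\bullet(x)\gtrsim\dist(x,\partial T)$ for $x\in T$, so I apply the fractional Hardy inequality
\begin{equation*}
\int_T \dist(x,\partial T)^{-2\beta}|g(x)|^2\,dx \lesssim \norm{g}_{H^\beta(T)}^2, \qquad 0<\beta<1/2,
\end{equation*}
with $g=(1-J_\bullet)v$, and combine it with the fractional Scott--Zhang approximation
\begin{equation*}
\norm{(1-J_\bullet)v}_{H^\beta(T)}\lesssim h_\bullet(T)^{s-\beta}\norm{v}_{\widetilde H^s(\patch_\bullet(T))},
\end{equation*}
obtained by interpolating between the $L^2$ and $\widetilde H^s$ bounds (cf.~\cite{affkp15}). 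Since $s-\beta=1/2$, the $h_\bullet(T)$-factors in the weight and in the approximation estimate cancel exactly, and summing over $T$ with the finite overlap of patches yields the claim.

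For (iv), I would exploit the freedom left in the construction: for every $z\in\NN_\coarse$ that is a vertex of at least one element of $\T_\coarse\cap\T_\fine$, pick $T_z\in\T_\coarse\cap\T_\fine$ (and otherwise arbitrarily). Then $T_z\in\T_\fine$ is unrefined, so $v|_{T_z}$ is affine for $v\in S^{1,0}(\T_\fine)$, and biorthogonality gives $\int_{T_z}\phi^\star_z v\,dx = v|_{T_z}(z)$. For any unrefined $T\in\T_\coarse\cap\T_\fine$ this value equals the nodal value $v|_T(z)$ at every (continuous) interior vertex, and hence $(J_\coarse v)|_T = \sum_{z\in\overline T\cap\NN_\coarse^{\rm int}} v(z)\phi_z|_T = v|_T$, where boundary vertices drop out by the homogeneous trace. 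I expect the main obstacle to lie in the $s>1/2$ case of (iii): one must verify that both the Hardy constant and the fractional Scott--Zhang approximation constant depend only on the shape regularity of $\T_\bullet$, and that the $h^{s-\beta}$-scaling precisely cancels the negative weight $h^{-(s-\beta)}$; these facts are available from the sharp local analysis of \cite{affkp15} but require careful bookkeeping.
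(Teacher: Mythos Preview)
Your treatment of (i), (ii), and (iv) coincides with the paper's: (i)--(ii) are quoted from \cite{affkp15}, and for (iv) the paper makes exactly the choice of $T_z\in\T_\coarse\cap\T_\fine$ for nodes of unrefined elements that you describe.

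For (iii) in the regime $1/2<s<1$ you correctly identify the elementwise fractional Hardy inequality as the key tool; the paper uses the same inequality (via \cite[Thm.~1.4.4.3]{Grisvard} on the reference element). The difference is in how the resulting $H^\beta(T)$-norm of $w:=(1-J_\bullet)v$ is controlled. You invoke a local estimate $\|w\|_{H^\beta(T)}\lesssim h_\bullet(T)^{s-\beta}\|v\|_{\widetilde H^s(\patch_\bullet(T))}$ ``by interpolating between the $L^2$ and $\widetilde H^s$ bounds''. This is the soft spot: the localized quantity $\|v\|_{\widetilde H^s(\patch_\bullet(T))}$ is not a well-defined norm in this context, and operator interpolation does not directly yield local estimates of this form. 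The paper avoids this by using the trivial embedding $\|\widehat w\|_{H^\beta(\widehat T)}\le\|\widehat w\|_{H^s(\widehat T)}$ on the reference element and scaling back, which gives
\[
\|\omega_\bullet^{-\beta}w\|_{L^2(T)}\lesssim h_\bullet(T)^{s-\beta}\,|w|_{H^s(T)}+h_\bullet(T)^{-\beta}\,\|w\|_{L^2(T)}.
\]
After multiplying by $h_\bullet(T)^{\beta-s}$ and summing, the first contribution is $\le|w|_{H^s(\Omega)}\lesssim\|v\|_{\widetilde H^s(\Omega)}$ by the global stability (i), and the second is $\|h_\bullet^{-s}w\|_{L^2(\Omega)}\lesssim\|v\|_{\widetilde H^s(\Omega)}$ by the already-known unweighted estimate~\eqref{eq:SZsmallS}. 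This reduces everything to \emph{global} bounds that are available, and no local fractional approximation estimate is needed. Your route can be made rigorous (embed $H^\beta\hookleftarrow H^s$ on the reference element, scale, and use a patch Poincar\'e inequality to absorb the $L^2$-term), but ``interpolation'' is not the mechanism, and the bookkeeping you anticipate is precisely what the paper's detour through \eqref{eq:SZsmallS} and (i) sidesteps.
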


\begin{proof}
(\ref{item:lemma:sz-i})--(\ref{item:lemma:sz-ii}) are proved in~\cite[Lemma~4]{affkp15}. 
With $1/\widetilde h_\bullet^{s}$ replaced by $h_\bullet^{-s}$, (\ref{item:lemma:sz-iii}) is 
proved in~\cite[Lemma~3.2]{part2}, where the estimate
\begin{align}\label{eq:SZsmallS}
\norm{h_\bullet^{-t} \, (1-J_\bullet) v}_{L^2(\Omega)} 
\lesssim  \norm{v}_{\widetilde H^t(\Omega)}
\end{align}
is established for any $0< t < 1$ as well.
Hence, it only remains to prove~(\ref{item:lemma:sz-iii}) for $1/2 < s < 1$, where $\widetilde h_\bullet^{-s} = h_\bullet^{\beta-s}\omega_\bullet^{-\beta}$.

With $\widehat T$ being the reference element, let $\omega_{\widehat T}(\widehat x) := \dist(\widehat x,\partial\widehat T)$ be the corresponding weight function. 
Let $\Phi_T : \widehat T \to T$ be an affine parametrization of $T \in \TT_\bullet$. 
For each $\widehat x \in \widehat T$, let $x := \Phi_T(\widehat x)$. 
Let $x_{\rm min} \in \partial T$ with $\abs{x - x_{\rm min}} = \dist(x,\partial T)$. Then, 
\begin{align*}
 \abs{x - x_{\rm min}} \le \abs{x - \Phi_T(\widehat x_{\rm min})} \lesssim h_\bullet(T) \, \abs{\widehat x - \widehat x_{\rm min}}
 \le h_\bullet(T) \, \abs{\widehat x - \Phi_T^{-1}(x_{\rm min})} \lesssim \abs{x - x_{\rm min}}
\end{align*}
and consequently
\begin{align*}
 h_\bullet(T) \, \omega_{\widehat T} \circ \Phi_T \simeq \omega_\bullet|_T
 \quad \text{pointwise for all } T \in \TT_\bullet.
\end{align*}
According to~\cite[Thm.~{1.4.4.3}]{Grisvard}, we have $\norm{\omega_{\widehat T}^{-\beta} \widehat w}_{L^2(\widehat T)} \lesssim \norm{\widehat w}_{H^\beta(\widehat T)}$ for all $\widehat w \in H^\beta(\widehat T)$, since $0 < \beta < 1/2$. Let $w := (1-J_\bullet) v$ and $\widehat w := w \circ \Phi_T$. 
A scaling argument thus proves that
\begin{align*}
 \norm{\omega_\bullet^{-\beta} w}_{L^2(T)}
 &\simeq |T|^{1/2} \, h_\bullet(T)^{-\beta} \, \norm{\omega_{\widehat T}^{-\beta} \widehat w}_{L^2(\widehat T)} 
 \lesssim |T|^{1/2} \, h_\bullet(T)^{-\beta} \norm{\widehat w}_{H^\beta(\widehat T)}
 \stackrel{0<\beta <s}{\lesssim} |T|^{1/2} \, h_\bullet(T)^{-\beta} \norm{\widehat w}_{H^s(\widehat T)}\\
 &\lesssim h_\bullet(T)^{s-\beta} |w|_{H^s(T)}+h_\bullet(T)^{-\beta} \norm{w}_{L^2(T)}.
\end{align*} 
Hence, by summation over all elements 
\begin{align*}
\norm{\widetilde h^{-s} w}_{L^2(\Omega)}  
&= \norm{h_\bullet^{\beta-s} \omega_\bullet^{-\beta} w}_{L^2(\Omega)} 
\lesssim \abs{w}_{H^s(\Omega)} + \norm{h_\bullet^{-s} w}_{L^2(\Omega)} 
\stackrel{\eqref{eq:SZsmallS}}{\lesssim} 
\abs{w}_{\widetilde H^s(\Omega)} + \norm{v}_{\widetilde H^s(\Omega)} 
\stackrel{(\ref{item:lemma:sz-i})}{\lesssim} \norm{v}_{\widetilde H^s(\Omega)}. 
\end{align*}
This concludes the proof of (\ref{item:lemma:sz-iii}). 

We prove (\ref{item:lemma:sz-iv}). To ensure the additional property 
\eqref{eq:property-SZ}, 
we use the freedom still available in the definition of the 
Scott-Zhang operator $J_\coarse$.  
To that end, let $\NN_{\coarse\cap\fine}$ be the nodes of the elements 
in $\T_\coarse \cap \T_\fine$.  For each $z \in \NN_{\coarse\cap\fine} \subset \NN_\coarse$, 
select an element $T_z \in \T_\coarse \cap \T_\fine$ such $z$ is a node of $T_z$; 
for the remaining nodes $z \in \NN_\coarse \setminus \NN_{\coarse\cap\fine}$, the element 
$T_z \in \T_\coarse$ merely needs to be such that $z$ is a node of $T_z$. This defines 
the Scott-Zhang operator $J_\coarse$. This particular choice ensures 
\begin{equation}
\label{eq:SZ-10}
(J_\coarse v)(z) = v(z) \qquad \text{for all } z \in \NN_{\coarse\cap\fine} \quad \text{and all } v \in S^{1,0}(\T_\fine), 
\end{equation}
and \eqref{eq:property-SZ} follows from \eqref{eq:SZ-10}. 
\end{proof}

\begin{proof}[Proof of~\eqref{axiom:reliability}]
Since $a(\cdot,\cdot)$ is elliptic on $\widetilde{H}^s(\Omega)$, we have 
\begin{align*}
 &\norm{u_\fine - u_\coarse}_{\widetilde H^s(\Omega)}^2
 \lesssim a(u_\fine - u_\coarse, u_\fine - u_\coarse)
 = a(u_\fine - u_\coarse, (1 - J_\coarse) (u_\fine - u_\coarse))
 \\&\qquad
 = \int_\Omega \big( f - (-\Delta)^s u_\coarse \big) \, (1 - J_\coarse)(u_\fine - u_\coarse) \, dx.
\end{align*}
In particular, the Cauchy--Schwarz inequality and Lemma~\ref{lemma:sz} (\ref{item:lemma:sz-iv}) show 
with $\omega:= \bigcup_{T \in \T_\coarse\cap\T_\fine} \overline{T}$
\begin{align*}
 \norm{u_\fine - u_\coarse}_{\widetilde H^s(\Omega)}^2
 &\lesssim \norm{\widetilde h_\coarse^{s} \, \big(f - (-\Delta)^s u_\coarse \big)}_{L^2(\Omega \backslash\omega)}
 \norm{\widetilde h_\coarse^{-s} \, (1 - J_\coarse)(u_\fine - u_\coarse)}_{L^2(\Omega \backslash\omega)}
 \\&
 = \left(\sum_{T \in \TT_\coarse\backslash\TT_\fine} \eta_\coarse(T,u_\coarse)^2 \right)^{1/2} \, 
 \norm{\widetilde h_\coarse^{-s} \, (1 - J_\coarse)(u_\fine - u_\coarse)}_{L^2(\Omega \backslash\omega)}.
\end{align*}
Lemma~\ref{lemma:sz} (\ref{item:lemma:sz-iii}) proves that
\begin{align*}
 \norm{\widetilde h_\coarse^{-s} \, (1 - J_\coarse)(u_\fine - u_\coarse)}_{L^2(\Omega)}
 \lesssim \norm{u_\fine - u_\coarse}_{\widetilde{H}^s(\Omega)}.
\end{align*}
The combination of the last two estimates concludes the proof.
\end{proof}

\subsection{Proof of Theorem~\ref{theorem:reliable}}

For reliability~\eqref{eq:reliable}, we sketch the proof from~\cite[Section~3.3]{CFPP14}.
Let $\TT_\coarse$ be a given triangulation. Recall that uniform refinement leads to convergence. Given $\eps > 0$, we may hence choose a refinement $\TT_\fine$ of $\TT_\coarse$ such that $\norm{u - u_\fine}_{\widetilde{H}^s(\Omega)} \le \eps$. Therefore, the triangle inequality and~\eqref{axiom:reliability} prove that
\begin{align*}
 \norm{u - u_\coarse}_{\widetilde{H}^s(\Omega)}
 \le \norm{u - u_\fine}_{\widetilde{H}^s(\Omega)} + \norm{u_\fine - u_\coarse}_{\widetilde{H}^s(\Omega)}
 \reff{axiom:reliability}\le \eps + \Cdrl \, \left( \sum_{T \in \T_\coarse \backslash \T_\fine} \eta_\coarse(T,u_\coarse)^2 \right)^{1/2}
 \le \eps + \Cdrl \, \eta_\coarse.
\end{align*}
As $\eps \to 0$, we prove~\eqref{eq:reliable}.

To prove the  weak efficiency~\eqref{eq:efficient}, we employ the inverse estimates~\eqref{eq1:invest}--\eqref{eq:invest} from Theorem~\ref{theorem:invest}. 
Since $0<s\leq 1/2$ with $s \neq 1/4$, this yields that  
\begin{align*}
 \eta_\coarse^2
 &= \norm{h_\coarse^s \big( f - (-\Delta)^s u_\coarse \big)}_{L^2(\Omega)}^2
 = \norm{ h_\coarse^s \, (-\Delta)^s( u -  u_\coarse )}_{L^2(\Omega)}^2
 \\& 
 \lesssim \norm{h_\coarse^s \, (-\Delta)^s J_\bullet( u -  u_\coarse )}_{L^2(\Omega)}^2
 + \norm{ h_\coarse^s \, (-\Delta)^s (1-J_\bullet)( u -  u_\coarse )}_{L^2(\Omega)}^2
 \\& 
 \lesssim \norm{J_\bullet( u -  u_\coarse )}_{\widetilde H^s(\Omega)}^2
 + \norm{(1-J_\bullet)( u -  u_\coarse )}_{\widetilde H^s(\Omega)}^2
 + \sum_{T \in \TT_\bullet} h_\bullet(T)^{2s} \, \norm{(1-J_\bullet)( u -  u_\coarse )}_{H^{2s}(\patch_\bullet(T))}^2.
\end{align*}
First, Lemma~\ref{lemma:sz} (\ref{item:lemma:sz-i}) guarantees that
\begin{align*}
 \norm{J_\bullet( u -  u_\coarse )}_{\widetilde H^s(\Omega)}
 + \norm{(1-J_\bullet)( u -  u_\coarse )}_{\widetilde H^s(\Omega)}
 \lesssim \norm{u -  u_\coarse}_{\widetilde H^s(\Omega)}.
\end{align*}
Next, we observe that $0 < 2s < 1$ with $2 s \ne 1/2$. 
Lemma~\ref{lemma:sz} (\ref{item:lemma:sz-ii}) and an 
interpolation argument
yield that
\begin{align*}
 h_\bullet(T)^s \norm{(1-J_\bullet)( u -  u_\coarse )}_{H^{2s}(\patch_\bullet(T))}
 \lesssim h_\bullet(T)^{1/2} \, \norm{u - u_\coarse}_{H^{1/2+s}(\patch_\bullet^2(T))}.
\end{align*}
Since newest vertex bisection is used to generate $\T_\coarse$, only a finite number of shapes of patches can occur. 
In particular, the hidden constant in the last estimate does not depend on $\T_\coarse$, but only on $\T_0$.
This concludes the proof.
\qed

\subsection{Proof of Theorem~\ref{theorem:algorithm}}
We note that \emph{reduction} from~\cite{CFPP14} is a consequence of conformity and~\eqref{axiom:stability}--\eqref{axiom:reduction}, since
\begin{align*}
 \left( \sum_{T \in \TT_\fine \backslash \TT_\coarse} \eta_\fine(T,v_\fine)^2 \right)^{1/2}
 &\reff{axiom:stability}\le  \left( \sum_{T \in \TT_\fine \backslash \TT_\coarse} \eta_\fine(T,v_\coarse)^2 \right)^{1/2}
 + \Cstab \, \norm{v_\fine - v_\coarse}_{\widetilde{H}^s(\Omega)} 
 \\&
 \reff{axiom:reduction}\le \qred \, \left( \sum_{T \in \TT_\coarse \backslash \TT_\fine} \eta_\coarse(T,v_\coarse)^2 \right)^{1/2}
 + \Cstab \, \norm{v_\fine - v_\coarse}_{\widetilde{H}^s(\Omega)}.
\end{align*} 
Therefore, Theorem~\ref{theorem:algorithm} immediately follows from~\cite[Theorem~4.1]{CFPP14}.\qed

\section{Proof of Theorem~\ref{theorem:invest}}
\label{sec:invest}

\subsection{Proof of Lemma~\ref{lem:blowup}}

For $x \in T$, we split the fractional Laplacian into a principal value part and a smoother, integrable part 
\begin{align}
\label{eq:10}
C(d,s)^{-1}(-\Delta)^s u_\ell(x) =  
\text{P.V.}\, \int_{B_{\dist(x,\partial T)}(x)}\frac{u_\ell(x)-u_\ell(y)}{\abs{x-y}^{d+2s}}dy +
\int_{\R^d\backslash B_{\dist(x,\partial T)}(x)}\frac{u_\ell(x)-u_\ell(y)}{\abs{x-y}^{d+2s}}dy.
\end{align}
Using polar coordinates $y = x+r \nu$, $\nu \in S^{d-1}$, where $S^{d-1}$ is the $(d-1)$-dimensional
unit sphere, and exploiting that $u_\ell|_T \in \mathcal{P}^1(T)$, we may compute the principal 
value part
\begin{align}\label{eq:pvcompute}
\text{P.V.}\, \int_{B_{\dist(x,\partial T)}(x)}\frac{u_\ell(x)-u_\ell(y)}{\abs{x-y}^{d+2s}}dy 
&= \lim_{\varepsilon \rightarrow 0} 
\int_{B_{\dist(x,\partial T)}(x)\backslash B_{\varepsilon}(x)} \frac{u_\ell(x)-u_\ell(y)}{\abs{x-y}^{d+2s}}dy 
\nonumber \\ &= 
\lim_{\varepsilon \rightarrow 0}\nabla u_\ell|_T \cdot \int_{B_{\dist(x,\partial T)}(x)\backslash B_{\varepsilon}(x)} \frac{x-y}{\abs{x-y}^{d+2s}}dy\nonumber \\
&= \lim_{\varepsilon \rightarrow 0} \nabla u_\ell|_T \cdot
\int_{\nu \in S^{d-1}} \int_{r = \epsilon }^{\dist(x,\partial T)}r^{-2s}\nu \, dr d\nu = 0,
\end{align}
where the last equality follows from interchanging the integration in $\nu$ and $r$. 
For the second part in the decomposition of $(-\Delta)^s u_\ell$ 
in (\ref{eq:10}) a similar computation using
the Lipschitz continuity of $u_\ell$ provides
\begin{align}\label{eq:estLipcont}
\left| 
\int_{\R^d\backslash B_{\dist(x,\partial T)}(x)}\frac{u_\ell(x)-u_\ell(y)}{\abs{x-y}^{d+2s}}dy
\right|
&\lesssim \norm{u_\ell}_{W^{1,\infty}(\Omega)}
\int_{B_{\dist(x,\partial T)}(x)^c} \frac{\abs{x-y}}{\abs{x-y}^{d+2s}}dy\nonumber \\
&= \norm{u_\ell}_{W^{1,\infty}(\Omega)}
\int_{\nu \in S^{d-1}} 
\int_{r = \dist(x,\partial T) }^{\operatorname{diam}\Omega}r^{-2s} 
drd \nu \nonumber\\
&\lesssim \norm{u_\ell}_{W^{1,\infty}(\Omega)}(1+\dist(x,\partial T)^{1-2s}).
\end{align}
Since 
$\dist(x,\partial T)^{\beta+1-2s} = (\omega_\ell|_T)^{\beta+1-2s}$ 
is square-integrable in view of $\beta > 2s-3/2$, Lemma~\ref{lem:blowup}
follows. 
\qed

\subsection{Proof of Theorem~\ref{theorem:invest}}
As in \cite{AFFKMP17}, which provides inverse estimates for the classical boundary integral
operators, the main idea of the proof of Theorem~\ref{theorem:invest} is a
splitting  of the operator into a smoother far-field part and a localized near-field part. 
The near-field and the far-field are treated separately in the following subsections.\\

The proofs for both statements \eqref{eq1:invest}--\eqref{eq:invest} are fairly similar 
and only differ in the use of the inverse inequality of Lemma~\ref{lem:nearfield} below.

Let $v \in \widetilde H^{2s}(\Omega)$.
We start by a localization and a splitting into near-field and far-field.
The $L^2$-norm in the error estimator can be written as a sum over all elements
\begin{align*}
 \norm{\widetilde h_\ell^{s}(-\Delta)^s v}_{L^2(\Omega)}^2 = 
 \sum_{T \in \T_\ell}\norm{\widetilde h_\ell^{s}(-\Delta)^s v}_{L^2(T)}^2.
\end{align*}
For any constant $c_T \in \R$, we have that $(-\Delta)^sc_T \equiv 0$, which implies 
$(-\Delta)^s v = (-\Delta)^s(v-c_T)$. 

Due to the nonlocality of the fractional Laplacian, we need to split the operator into two contributions, a
localized near-field part and a smoother far-field part. 
To this end, we select, for each element $T \in \T_\ell$, a cut-off function 
$\chi_T \in C_0^{\infty}(\R^d)$ with the following properties: 
i) $\operatorname{supp} \chi_T \subset \Omega_\ell(T)$, 
where $\Omega_\ell(T)$ is the patch of $T$ 
defined in \eqref{eq:patch};
ii) there is a set $B'$ with $T \subset B' \subset \Omega_\ell(T)$ and
$\dist(T,\partial B') \sim h_\ell(T)$; \linebreak
iii) 
$\chi_T \equiv 1$ in $B'$; 
iv)
$\norm{\chi_T}_{W^{1,\infty}(\Omega_\ell(T))} \leq C h_\ell(T)^{-1}$ 
as well as $\dist(\supp \chi_T,\partial \Omega_\ell(T)) > c h_\ell(T)$ with constants $C$, $c>0$ 
depending only on the $\gamma$-shape regularity of $\T_\ell$ and $d$.  

Then, for each element $T \in \T_\ell$, we have the splitting 
$(-\Delta)^s(v-c_T) = (-\Delta)^s(\chi_T(v-c_T))+ \linebreak (-\Delta)^s ((1-\chi_T)(v-c_T))$
into the near-field $v_{\rm near}^T$ and a far-field $v_{\rm far}^T$ given by 
\begin{align}\label{eq:nearfieldDef}
v_{\rm near}^T &:= (-\Delta)^s(\chi_T(v-c_T)),\\
\label{eq:farfieldDef}
 v_{\rm far}^T &:= (-\Delta)^s((1-\chi_T)(v-c_T)).
\end{align}
We choose 
\begin{align}
\label{eq:c_T}
c_T:= 
\begin{cases}
0 & \text{ if $\overline{T}\cap \partial \Omega \neq \emptyset$}, \\
\frac{1}{\abs{\Omega_\ell(T)}}\int_{\Omega_\ell(T)}v &\text{ otherwise.}
\end{cases}
\end{align}
If $v=v_\ell \in S^{1,1}_0(\T_\ell)$, we write $v_{\rm near,\ell}^T$, $v_{\rm near,\ell}^T$ 
for the near-field and far-field to emphasize that the fields are related to discrete functions. 
For the near-field for $v \in \widetilde{H}^{2s}(\Omega)$ with $0<s\leq 1/2$ and $s \neq 1/4$, 
Lemma~\ref{lem:nearfield} below provides the estimate
\begin{align}
\label{eq:20}
 \sum_{T \in \T_\ell} \norm{ h_\ell^{s} v_{\rm near}^T}_{L^2(T)}^2 
 \lesssim \norm{v}^2_{H^s(\Omega)}  + \sum_{T\in\T_\ell }h_\ell(T)^{2s}\norm{ v}^2_{H^{2s}(\Omega_\ell(T))},
\end{align}
and for discrete $v_\ell \in S^{1,1}_0(\T_\ell)$ and $0<s<1$
\begin{align}
\label{eq:30}
  \sum_{T \in \T_\ell} \norm{ \widetilde h_\ell^{s} v_{\rm near,\ell}^T}_{L^2(T)}^2 
 \lesssim \norm{v_\ell}^2_{H^s(\Omega)}.
\end{align}
For any $v \in \widetilde{H}^s(\Omega)$, Lemma~\ref{lem:farfield} gives
\begin{align}
\label{eq:40}
\sum_{T \in \T_\ell}\norm{ \widetilde h_\ell^{s} v_{\rm far}^T}_{L^2(T)}^2 \lesssim \norm{v}_{\widetilde H^s(\Omega)}^2. 
\end{align}
Combining (\ref{eq:20}), (\ref{eq:40}), we prove the inverse inequality 
(\ref{eq1:invest}); the estimate 
(\ref{eq:invest}) is obtained from 
(\ref{eq:30}) and (\ref{eq:40}).  
\qed

\subsubsection{The near-field}

In this subsection, we treat the near-field $v_{\rm near}^T := (-\Delta)^s(\chi_T(v-c_T))$. We start with a Poincar\'e inequality:

\begin{lemma}
\label{lemma:poincare}
Let $c_T$ be given by \eqref{eq:c_T} and $v \in \widetilde{H}^s(\Omega)$. Then, for a constant $C > 0$ depending
solely on $\Omega$, $s$, the $\gamma$-shape regularity of $\T$, and the fact
that NVB is used, we have
\begin{align}
\label{eq:poincare} 
\|v - c_T\|_{L^2(\Omega_\ell(T))} 
\leq C h_\ell(T)^s \|v\|_{H^s(\Omega_\ell(T))}. 
\end{align}
\end{lemma}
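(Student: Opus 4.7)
The plan is to reduce the estimate to a fractional Poincar\'e-type inequality on a fixed reference configuration by scaling. Since all triangulations $\TT_\ell$ are obtained from the fixed initial mesh $\TT_0$ by newest vertex bisection, the patches $\Omega_\ell(T)$ admit only finitely many shapes up to affine scaling. For each $T$, I would fix an affine bijection $\Phi_T : \widehat{\Omega} \to \Omega_\ell(T)$ onto such a reference patch (of diameter $\sim 1$), set $\widehat{v} := v \circ \Phi_T$, and use the standard change-of-variables identities
\begin{equation*}
\|v\|_{L^2(\Omega_\ell(T))}^2 \simeq h_\ell(T)^d \|\widehat v\|_{L^2(\widehat\Omega)}^2,
\qquad
|v|_{H^s(\Omega_\ell(T))}^2 \simeq h_\ell(T)^{d-2s} |\widehat v|_{H^s(\widehat\Omega)}^2,
\end{equation*}
together with the observation that the constant $c_T$ pulls back to the analogous constant $\widehat c$ on $\widehat\Omega$ (the mean of $\widehat v$ in the interior case, or $\widehat c = 0$ in the boundary case). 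The claim thereby reduces to proving $\|\widehat v - \widehat c\|_{L^2(\widehat\Omega)} \lesssim |\widehat v|_{H^s(\widehat\Omega)}$ uniformly over the finitely many reference configurations.

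In the interior case, where $\widehat c$ is the mean value of $\widehat v$ on the fixed Lipschitz domain $\widehat\Omega$, this is precisely the fractional Poincar\'e--Wirtinger inequality, proved by the classical Rellich compactness / contradiction argument: any sequence with unit $L^2$-norm, zero mean, and vanishing $H^s$-seminorm would converge in $L^2$ to a constant of zero mean, hence zero, contradicting the normalization.

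In the boundary case $\widehat c = 0$, I would use that $v \in \widetilde H^s(\Omega)$ implies the extension $\widetilde v$ by zero outside $\Omega$ lies in $H^s(\R^d)$. One then passes to a fixed enlarged reference patch $\widehat\Omega^\ast$ containing $\widehat\Omega$ and a region of positive measure across the image of $\partial\Omega \cap \overline{\Omega_\ell(T)}$, on which $\widetilde v \equiv 0$. The same compactness argument, applied to $\widetilde v|_{\widehat\Omega^\ast} \in H^s(\widehat\Omega^\ast)$, yields a Friedrichs-type inequality $\|\widehat v\|_{L^2(\widehat\Omega)} \le \|\widetilde v\|_{L^2(\widehat\Omega^\ast)} \lesssim |\widetilde v|_{H^s(\widehat\Omega^\ast)}$. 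The seminorm $|\widetilde v|_{H^s(\widehat\Omega^\ast)}^2$ splits as $|\widehat v|_{H^s(\widehat\Omega)}^2$ plus a cross-term $\int_{\widehat\Omega} |\widehat v(x)|^2 K(x)\,dx$ with a Hardy-type weight $K(x) \sim \dist(x, \widehat\Omega^\ast \setminus \widehat\Omega)^{-2s}$; since only finitely many reference configurations $(\widehat\Omega, \widehat\Omega^\ast)$ occur, this weighted term is uniformly controlled by the full $\|\widehat v\|_{H^s(\widehat\Omega)}^2$ via a local Hardy-type bound (in the spirit of \cite[Thm.~1.4.4.3]{Grisvard}).

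The main obstacle is the boundary case: for $s \le 1/2$, the classical trace of $v$ on $\partial\Omega$ is not available, yet the argument still succeeds because the $\widetilde H^s$-property encodes the vanishing in the stronger sense that the zero extension lies in $H^s(\R^d)$, which is exactly what is needed to run the compactness argument on the enlarged reference patch. The finiteness of possible reference configurations under NVB from $\TT_0$ then yields a uniform constant depending only on $\Omega$, $s$, the shape-regularity constant $\gamma$, and the use of NVB, as asserted.
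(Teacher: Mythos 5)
Your overall strategy --- affine scaling to finitely many reference configurations (possible because NVB produces only finitely many patch shapes) plus a Rellich compactness argument --- is exactly the route the paper takes: for interior patches the paper simply cites \cite[Lemma~5.1]{AFFKMP17} and \cite{AcoBor17}, and for boundary patches it briefly invokes ``the boundary condition satisfied by $v$'' together with compactness. You are right that the subtle point is the boundary case for $s\le 1/2$, where $v$ has no trace on $\partial\Omega$ and the vanishing must instead be encoded through the zero extension on an enlarged reference patch; making this explicit is a genuine refinement of the paper's one-line sketch.

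The boundary-case argument as you wrote it does not close, however. You correctly note that the scaling identities reduce the claim to the \emph{seminorm} estimate $\|\widehat v\|_{L^2(\widehat\Omega)}\lesssim |\widehat v|_{H^s(\widehat\Omega)}$ on the reference patch --- only the seminorm scales as $h^{-d+2s}$, which is what produces the factor $h_\ell(T)^s$ in \eqref{eq:poincare}. But your final step bounds the Hardy cross-term $\int_{\widehat\Omega}|\widehat v|^2 K$ by the \emph{full} norm $\|\widehat v\|_{H^s(\widehat\Omega)}^2$, which contains $\|\widehat v\|_{L^2(\widehat\Omega)}^2$ itself. What you have therefore shown is only $\|\widehat v\|_{L^2(\widehat\Omega)}\lesssim \|\widehat v\|_{L^2(\widehat\Omega)}+|\widehat v|_{H^s(\widehat\Omega)}$, which after scaling back is the trivially true $\|v\|_{L^2(\Omega_\ell(T))}\lesssim \|v\|_{L^2(\Omega_\ell(T))}+h_\ell(T)^s|v|_{H^s(\Omega_\ell(T))}$, not \eqref{eq:poincare}. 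This cannot be repaired by tightening the Hardy step to the seminorm, because for $s<1/2$ Grisvard's inequality genuinely requires the full $H^s$-norm; indeed the seminorm Friedrichs bound is \emph{false} in this regime, since a function equal to a nonzero constant on a boundary patch extends by zero to an element of $H^s(\R^d)$ when $s<1/2$. The way out is to not over-estimate the weight $\dist(\widehat x,\widehat\Omega^\ast\setminus\widehat\Omega)^{-2s}$ by $\dist(\widehat x,\partial\widehat\Omega)^{-2s}$; tracked through the scaling it yields $h_\ell(T)^{2s}\|v/\rho^s\|_{L^2(\Omega_\ell(T))}^2$, which \emph{does} scale like the seminorm and, after summation over elements, is absorbed into $\|v\|_{\widetilde H^s(\Omega)}^2$ exactly as the application in Lemma~\ref{lem:nearfield} requires. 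In other words, for $s\le 1/2$ the right-hand side for boundary elements should carry this patch-local Hardy term rather than the $L^2$ part of $\|v\|_{H^s(\Omega_\ell(T))}$; the paper's own ``due to the boundary condition'' step passes over the same issue.
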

\begin{proof}
If $\overline{T} \cap \partial\Omega = \emptyset$, then the Poincar\'e 
inequality 
\eqref{eq:poincare} is shown in 
\cite[Lemma~5.1]{AFFKMP17} and \cite{AcoBor17}.
For elements at the boundary, we have chosen $c_T = 0$. 
Due to the boundary condition satisfied by $v$, 
we have the Poincar\'e inequality 
$ h_\ell(T)^{-2s}\norm{v}_{L^2(\Omega_\ell(T))}^2 \lesssim \abs{v}_{H^s(\Omega_\ell(T))}^2$. 
To see this, one makes four observations: 
a) the power $h_\ell^{-2s}$ is obtained by scaling; 
b) $\overline{T} \cap \partial\Omega \ne \emptyset$ implies that at least one face of
$\partial\Omega_\ell(T)$ lies on $\partial\Omega$; c) only finitely many shapes are possible for 
$\Omega_\ell(T)$ since the meshes are created by NVB; d) the Poincar\'e inequality 
for the patch $\Omega_\ell(T)$ scaled to size $1$ holds with constant $O(1)$ by the compactness
of the embedding of $H^{s}$ in $L^2$. 
\end{proof}

\begin{lemma}\label{lem:nearfield}
There is a constant $C>0$ depending only on $\Omega$, $d$, $s$, 
and the $\gamma$-shape regularity of $\T_\ell$ such that the following holds: 
\begin{enumerate}[(i)]
\item
\label{item:lem:nearfield-i}
For $v \in \widetilde H^{2s}(\Omega)$ with $0 < s \leq 1/2$ and $s\neq 1/4$, let the near-field 
$v_{\rm near}^T$ be given by \eqref{eq:nearfieldDef}. Then,

 \begin{align}
\label{eq:lem:nearfield-1}
  \sum_{T \in \T_\ell} \norm{h_\ell^{s} v_{\rm near}^T}_{L^2(T)}^2 
  \leq C \Big( \norm{v}_{\widetilde H^s(\Omega)}^2 + 
  \sum_{T\in\T_\ell }h_\ell(T)^{2s}\norm{ v}^2_{H^{2s}(\Omega_\ell(T))}\Big).
\end{align}
\item
\label{item:lem:nearfield-ii}
For $v_\ell \in S^{1,1}_0(\T_\ell)$ and arbitrary $0<s<1$, the near-field $v_{\rm near,\ell}^T$ satisfies that  
 \begin{align*}
  \sum_{T \in \T_\ell} \norm{\widetilde h_\ell^{s} \, v_{\rm near,\ell}^T}_{L^2(T)}^2 
  \leq C \norm{v_\ell}_{\widetilde H^s(\Omega)}^2.
\end{align*}
\end{enumerate}
\end{lemma}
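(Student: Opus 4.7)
The plan is to estimate $v_{\rm near}^T=(-\Delta)^s(\chi_T(v-c_T))$ by the $H^{2s}(\R^d)$-norm of its compactly supported argument, so that the mapping property (\ref{eq:mapping-property}) reduces everything to a local Sobolev estimate on $\Omega_\ell(T)$ together with the Poincar\'e inequality of Lemma~\ref{lemma:poincare}. This strategy directly covers part (i) and part (ii) in the range $s\le 1/2$; the case $1/2<s<1$ of part (ii) must be handled by a pointwise computation since $v_\ell\notin H^{2s}$ once $2s>1$.

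For part (i), I would first invoke $(-\Delta)^s\colon H^{2s}(\R^d)\to L^2(\R^d)$ to obtain $\|v_{\rm near}^T\|_{L^2(T)}\lesssim \|\chi_T(v-c_T)\|_{H^{2s}(\R^d)}$. A fractional Leibniz-type estimate, derived directly from the seminorm in (\ref{eq:norm}) via the splitting $\chi_T(x)w(x)-\chi_T(y)w(y)=\chi_T(x)(w(x)-w(y))+w(y)(\chi_T(x)-\chi_T(y))$ and the cutoff bounds $\|\chi_T\|_{L^\infty}\lesssim 1$, $\|\chi_T\|_{W^{1,\infty}}\lesssim h_\ell(T)^{-1}$, then yields
\[
\|\chi_T(v-c_T)\|_{H^{2s}(\R^d)}^2 \lesssim \|v\|_{H^{2s}(\Omega_\ell(T))}^2 + h_\ell(T)^{-4s}\|v-c_T\|_{L^2(\Omega_\ell(T))}^2.
\]
Lemma~\ref{lemma:poincare} absorbs the $L^2$-term into $h_\ell(T)^{-2s}\|v\|_{H^s(\Omega_\ell(T))}^2$; multiplying by $h_\ell(T)^{2s}$ and summing over $T$ using the finite overlap of patches produces (\ref{eq:lem:nearfield-1}). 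The exclusion $s=1/4$ reflects the critical exponent $2s=1/2$ at which the fractional Leibniz estimate or the identification $\widetilde H^{2s}\simeq H^{2s}_0$ degenerates. For part (ii) with $s\le 1/2$, the same argument applies to $v_\ell\in H^1_0(\Omega)\subset \widetilde H^{2s}(\Omega)$, and the extra $h_\ell(T)^{2s}\|v_\ell\|_{H^{2s}(\Omega_\ell(T))}^2$-contribution is absorbed by the polynomial inverse estimate $\|v_\ell\|_{H^{2s}(\Omega_\ell(T))}\lesssim h_\ell(T)^{-s}\|v_\ell\|_{H^s(\Omega_\ell(T))}$.

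For $1/2<s<1$ in part (ii), I would mimic the pointwise computation of Lemma~\ref{lem:blowup}. For $x\in T$, split the defining principal-value integral of $v_{\rm near,\ell}^T(x)$ into the contribution on $B_{\dist(x,\partial T)}(x)\subset T$ and on its complement. On the inner ball one has $\chi_T\equiv 1$ (by the construction of $\chi_T$) and $v_\ell$ is affine, so the principal value vanishes by the cancellation (\ref{eq:pvcompute}). The complement is controlled, via Lipschitz continuity of $\chi_T(v_\ell-c_T)$ with $W^{1,\infty}$-norm $\lesssim h_\ell(T)^{-1}\|v_\ell-c_T\|_{L^\infty(\Omega_\ell(T))}+\|\nabla v_\ell\|_{L^\infty(\Omega_\ell(T))}$, by a constant times $\omega_\ell(x)^{1-2s}$ plus bounded terms, exactly in the spirit of (\ref{eq:estLipcont}). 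Multiplying by $\widetilde h_\ell^s=h_\ell^{s-\beta}\omega_\ell^\beta$ with $\beta=s-1/2$, the dominant contribution to the integrand on $T$ becomes $h_\ell(T)^{2(s-\beta)}\omega_\ell^{2\beta+2-4s}=h_\ell(T)^{2s-2\beta}\omega_\ell^{1-2s}$, which is integrable over $T$ for all $s<1$, and a standard discrete inverse estimate converting $\|\nabla v_\ell\|_{L^\infty(\Omega_\ell(T))}$ into $h_\ell(T)^{-d/2-1+s}\|v_\ell\|_{H^s(\Omega_\ell(T))}$ closes the bound, yielding $\|v_\ell\|_{H^s(\Omega_\ell(T))}^2$ per element and hence $\|v_\ell\|_{\widetilde H^s(\Omega)}^2$ after summation. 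The main obstacle will be precisely this last case: one must carefully propagate the correct powers of $h_\ell(T)$ through the pointwise bound and the $\omega_\ell$-weighted integration so that, after summation, no global factor remains, which is exactly where the calibration $\beta=s-1/2$ in the definition (\ref{eq:estimator}) of $\widetilde h_\ell^s$ is decisive.
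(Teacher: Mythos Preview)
Your proposal is correct and follows essentially the same strategy as the paper: mapping property plus a product/Leibniz splitting and Poincar\'e for part~(i), the polynomial inverse estimate for part~(ii) with $s\le 1/2$, and a pointwise computation (vanishing PV part on the ball inside $T$, Lipschitz bound on the complement) for $1/2<s<1$. The only notable difference is packaging: where the paper invokes the Faermann localization of the $H^{2s}$-seminorm and then estimates the resulting pieces $S_1$--$S_3$ (and uses the Hardy inequality, which is where $s\neq 1/4$ enters), you compress this into a single Leibniz-type bound on $\|\chi_T(v-c_T)\|_{H^{2s}(\R^d)}$; this works precisely because of the property $\dist(\supp\chi_T,\partial\Omega_\ell(T))\gtrsim h_\ell(T)$, which you should state explicitly since it is what localizes the $\R^d$-seminorm to $\Omega_\ell(T)$ up to the $h_\ell(T)^{-4s}\|v-c_T\|_{L^2}^2$ remainder.
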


\begin{proof}
Before proving this lemma, we point out that we will frequently use 
in sums the shorthand $T' \in \Omega_\ell(T)$ to mean that the summation
is over all $T' \in \T_\ell$ satisfying $T' \subset \Omega_\ell(T)$. 

\emph{Proof of (\ref{item:lem:nearfield-i}):}
The mapping properties (\ref{eq:mapping-property}) 
of the fractional Laplacian as a pseudo-differential operator of order $2s$
as well as the additional assumption $v \in \widetilde H^{2s}(\Omega)\subset H^{2s}(\R^d)$ imply
\begin{align}\label{eq:nearfieldmapping}
h_\ell(T)^{2s} \norm{v_{\rm near}^T}_{L^2(T)}^2 
\lesssim h_\ell(T)^{2s} \|\chi_T( v - c_T)\|_{\widetilde H^{2s}(\Omega)}^2
\lesssim h_\ell(T)^{2s} \norm{\chi_T(v-c_T)}_{H^{2s}(\Omega)}^2,
\end{align}
where the last inequality follows from a Hardy inequality that is valid provided that $s \neq 1/4$, see,
e.g., \cite[Thm.~{1.4.4.4}]{Grisvard}.

For $s=1/2$, we have the local $H^1$-norm on the right-hand side
of (\ref{eq:lem:nearfield-1}), and we can directly estimate this by
$\norm{v}_{H^{1}(\Omega_\ell(T))}^2$. We note that this case
coincides with the result for the hypersingular integral operator in the  boundary element method,
\cite{AFFKMP17}.

The $H^{2s}$-norm is nonlocal for $s\neq 1/2$, but it has a localized upper-bound, cf.~\cite{Faer00,Faer02}, 
\begin{align}
\nonumber  & \norm{\chi_T(v-c_T)}_{H^{2s}(\Omega)}^2 \\ 
\nonumber 
& \leq\sum_{T'\in \T_\ell} \int_{T'} \int_{\Omega_\ell(T')} \frac{\abs{\chi_T(x)(v(x)-c_T) - 
 \chi_T(y)(v(y)-c_T)}^2}{\abs{x-y}^{d+4s}}dydx 
 + \frac{C}{h_\ell(T')^{2s}}\norm{\chi_T(v-c_T)}_{L^2(T')}^2 \\
\nonumber 
 & \lesssim \sum_{T'\in \T_\ell} \int_{T'} \int_{\Omega_\ell(T')} 
\frac{\abs{\chi_T(x)(v(x)-v(y))}^2}{\abs{x-y}^{d+4s}}dydx 
+ 
\frac{\abs{(\chi_T(y)-\chi_T(x))(v(y)-c_T)}^2}{\abs{x-y}^{d+4s}}dydx \\
& \qquad 
 \mbox{}+ \frac{1}{h_\ell(T')^{2s}}\norm{\chi_T(v-c_T)}_{L^2(T')}^2 
\label{eq:nearfieldlocalizednorm1}
=: S_1 + S_2 + S_3.
\end{align}
\emph{Step 1:} (Estimate of $S_3$)
As $\supp \chi_T \subset \Omega_\ell(T)$, the last term sums up to a $L^2$-norm on the patch $\Omega_\ell(T)$. Since 
the size of neighboring elements differ only by a constant multiplicative factor, we can estimate 
$h_\ell(T')^{-2s} \lesssim h_\ell(T)^{-2s}$ for all $T' \in \Omega_\ell(T)$.
The Poincar\'e inequality on the patch $\Omega_\ell(T)$ given in 
Lemma~\ref{lemma:poincare} then gives 
\begin{align*}
S_3 = 
 \sum_{T'\in \T_\ell} h_\ell(T')^{-2s} \norm{\chi_T(v-c_T)}^2_{L^2(T')} \lesssim 
 h_\ell(T)^{-2s} \norm{v-c_T}_{L^2(\Omega_\ell(T))}^2 \lesssim \norm{v}_{H^s(\Omega_\ell(T))}^2.
\end{align*}
%

\emph{Step 2:} (Estimate of $S_1$)
%
To estimate $S_1$, we use that $\supp \chi_T \subset \Omega_\ell(T)$. 
Therefore, 
only elements $T' \in \Omega_\ell(T)$ appear. With a hidden constant depending on the $\gamma$-shape regularity 
of $\T_\ell$, this leads to 
\begin{align}\label{eq:nearfieldlocalizednorm3}
S_1 
 &\lesssim \int_{\Omega_\ell(T)} \int_{\Omega_\ell(T)} \frac{(v(x)-v(y))^2}{\abs{x-y}^{d+4s}}dydx 
+ 
 \sum_{T'\in \Omega_\ell(T)\backslash T}\int_{T'} \chi_T(x)^2
 \int_{\Omega_\ell(T')\backslash\Omega_\ell(T)} \frac{(v(x)-v(y))^2}{\abs{x-y}^{d+4s}}dydx.
\end{align}
The first term is just the $H^s(\Omega_\ell(T))$-seminorm so that we 
may concentrate on the second term. 
Since $\operatorname*{dist}(\supp \chi_T,\partial \Omega_\ell(T)) > c h_\ell(T)$, 
the denominator in the second integrand can be directly estimated by powers of $h_\ell(T)$. 
For arbitrary $T'\in \Omega_\ell(T)\backslash T$, this immediately leads to 
\begin{align}\label{eq:nearfieldlocalizednorm4}
&\int_{T'} \chi_T(x)^2
 \int_{\Omega_\ell(T')\backslash\Omega_\ell(T)} \frac{(v(x)-v(y))^2}{\abs{x-y}^{d+4s}}dydx \lesssim
 \int_{T'} \chi_T(x)^2 \int_{\Omega_\ell(T')\backslash\Omega_\ell(T)}(v(x)-v(y))^2h_\ell(T)^{-d-4s}dydx \nonumber \\
  &\qquad\lesssim h_\ell(T)^{-d-4s}\left( \int_{T'} \chi_T(x)^2
  \int_{\Omega_\ell(T')\backslash\Omega_\ell(T)}(v(x)-c_T)^2dydx+\int_{T'} \chi_T(x)^2
  \int_{\Omega_\ell(T')\backslash\Omega_\ell(T)}(v(y)-c_T)^2dydx \right) \nonumber \\
  &\qquad\lesssim h_\ell(T)^{-4s}\norm{\chi_T(v -c_T)}_{L^2(T')}^2
  +h_\ell(T)^{-4s}\norm{v -c_T}_{L^2(\Omega_\ell(T')\backslash\Omega_\ell(T))}^2.
\end{align}
After summation over $T'$, the desired estimate follows from 
the Poincar\'e inequality of Lemma~\ref{lemma:poincare}. 

\emph{Step 3:} (Estimate of $S_2$)
Since the integrand vanishes 
if $T' \notin \Omega_\ell(T)$, we may split the sum as in Step~2 into a smooth part and a part defined on the patch 
$\Omega_\ell(T)$, i.e.,
\begin{align}\label{eq:nearfieldlocalizednorm5}
S_2 &\lesssim 
\int_{\Omega_\ell(T)}\int_{\Omega_\ell(T)}  \frac{(v(y)-c_T)^2(\chi_T(x)-\chi_T(y))^2}{\abs{x-y}^{d+4s}}dydx \nonumber \\
 & \qquad \mbox{}+ 
  \sum_{T'\in \Omega_\ell(T)\backslash T}\int_{T'}
  \int_{\Omega_\ell(T')\backslash\Omega_\ell(T)} \frac{(v(y)-c_T)^2(\chi_T(x)-\chi_T(y))^2}{\abs{x-y}^{d+4s}}dydx
=:S_{2,1} + S_{2,2}.
\end{align}
Using the Lipschitz continuity of 
$\chi_T$, we obtain
\begin{align*}
S_{2,1}
&\lesssim
 \norm{\nabla \chi_T}_{L^{\infty}(\R^d)}^2\int_{\Omega_\ell(T)}\int_{\Omega_\ell(T)} \frac{(v(y)-c_T)^2}{\abs{x-y}^{d+4s-2}}dydx \\
 &\lesssim
 h_\ell(T)^{-2} \int_{\Omega_\ell(T)}(v(y)-c_T)^2\int_{\Omega_\ell(T)} \frac{1}{\abs{x-y}^{d+4s-2}}dxdy
\\
&
\stackrel{\text{Lem.~\ref{lemma:poincare}}}{\lesssim} 
 h_\ell(T)^{-2+2s} \|v\|^2_{H^s(\Omega_\ell(T))} 
\sup_{y \in \Omega_\ell(T)} \int_{\Omega_\ell(T)} \frac{1}{\abs{x-y}^{d+4s-2}}dx. 
\end{align*}
Since we assume $s < 1/2$, a direct calculation reveals 
\begin{align*}
 \int_{\Omega_\ell(T)} \frac{1}{\abs{x-y}^{d+4s-2}}dx 
 \lesssim \int_0^{ch_\ell(T)} r^{-4s+1} dr \lesssim h_\ell(T)^{-4s + 2},
\end{align*}
so that $S_{2,1}$ can be estimate in the required way. To estimate
$S_{2,2}$ one uses again the Lipschitz continuity of $\chi_T$, the observation 
$\operatorname*{dist}(\supp \chi_T,\partial \Omega_\ell(T)) > c h_\ell(T)$,
as well as a Poincar\'e inequality of Lemma~\ref{lemma:poincare} so that 
$S_{2,2}$ 
can be estimated using the same arguments as 
in \eqref{eq:nearfieldlocalizednorm4}.
Putting the estimates for $S_1$, $S_2$, and $S_3$ 
into \eqref{eq:nearfieldmapping} and summing over all elements using the 
$\gamma$-shape regularity of $\T_\ell$, we obtain
\begin{align*}
\sum_{T\in\T_\ell}h_\ell(T)^{2s} \norm{v_{\rm near}^T}_{L^2(T)}^2 \lesssim \norm{v}_{\widetilde H^{s}(\Omega)}^2 +
\sum_{T\in\T_\ell}h_\ell(T)^{2s} \norm{v}_{H^{2s}(\Omega_\ell(T))}^2 .
\end{align*}

\emph{Proof of (\ref{item:lem:nearfield-ii}) in the case $0 < s \leq 1/2$:} 
For $2s \leq 1$ and $v_\ell \in S^{1,1}_0(\T_\ell)$ we have the classical 
inverse estimate (e.g., \cite[Thm. 4.4.2]{SauterSchwab})
\begin{align}
\label{eq:classical-inverse-estimate}
 \norm{v_\ell}_{H^{2s}(\Omega_\ell(T))}\lesssim h_\ell(T)^{-s}\norm{v_\ell}_{H^s(\Omega_\ell(T))}. 
\end{align}
Hence, for $s \in (0,1/2]\setminus \{1/4\}$ and a $v_\ell \in S^{1,1}_0(\T_\ell)$, 
we may directly combine the result of (\ref{item:lem:nearfield-i}) with the 
inverse estimate  
\eqref{eq:classical-inverse-estimate}
to get the desired estimate. 
To obtain the case $s = 1/4$, we note that in the above proof of (\ref{item:lem:nearfield-i}), 
the assumption $s \ne 1/4$ entered only through the estimate \eqref{eq:nearfieldmapping}. 
However, the bound $\|\chi_T (v_\ell - c_T)\|_{\widetilde H^{2s}(\Omega)} 
\lesssim \|\chi_T (v_\ell - c_T)\|_{H^{2s}(\Omega)} $ is still valid for 
$v_\ell \in S^{1,1}_0(\T_\ell)$.

\emph{Proof of (\ref{item:lem:nearfield-ii}) in the case $1/2 < s < 1$:} 
Here, we cannot use the  mapping properties 
 of the fractional Laplacian since $v_\ell$ may not be in $H^{2s}(\Omega)$ for $s\geq3/4$. 
 However, we can directly estimate the fractional Laplacian using that due to $s>1/2$ the function $r^{-2s+1}$ 
 vanishes at infinity, which does not hold for the case $s<1/2$. We write
  \begin{align*}
  \sum_{T \in \T_\ell} \norm{\widetilde h_\ell^{s} \, v_{\rm near,\ell}^T}_{L^2(T)}^2 = 
  \sum_{T \in \T_\ell}h_\ell(T)^{2s-2\beta} \norm{\omega_\ell^\beta(-\Delta)^s((v_\ell-c_T)\chi_T)}_{L^2(T)}^2.
\end{align*}
 The definition of the 
fractional Laplace leads to
\begin{align}\label{eq:investslarge1}
\norm{\omega_\ell^\beta(-\Delta)^s((v_\ell-c_T)\chi_T)}_{L^2(T)}^2 &= 
\int_T\omega_\ell(x)^{2\beta}\left(\text{P.V.}\int_{\R^d}\frac{(v_\ell(x)-c_T)\chi_T(x)-(v_\ell(y)-c_T)\chi_T(y)}{\abs{x-y}^{d+2s}} dy\right)^2 dx
\nonumber \\& \lesssim 
\int_T\omega_\ell(x)^{2\beta}(v_\ell(x)-c_T)^2\left(\text{P.V.}\int_{\R^d}\frac{\chi_T(x)-\chi_T(y)}{\abs{x-y}^{d+2s}} dy\right)^2 dx
\nonumber \\& \quad
+\int_T\omega_\ell(x)^{2\beta}\left(\text{P.V.}\int_{\R^d}\chi_T(y)\frac{v_\ell(x)-v_\ell(y)}{\abs{x-y}^{d+2s}} dy\right)^2 dx
=:S_{4} + S_{5}. 
\end{align}
We treat the integrals $S_4$, $S_5$ on the right-hand side separately, starting with $S_5$. 
We split the integral $S_5$ further into two parts,
a principal value integral containing the singularity and a smoother, integrable part: 
\begin{align*}
S_5 & \lesssim 
\int_T\omega_\ell(x)^{2\beta}\left(\text{P.V.}\int_{B_{\dist(x,\partial T)}(x)}\chi_T(y)\frac{v_\ell(x)-v_\ell(y)}{\abs{x-y}^{d+2s}} dy\right)^2 dx
\\&\quad\quad +
\int_T\omega_\ell(x)^{2\beta}\left(\int_{B_{\dist(x,\partial T)}(x)^c}\chi_T(y)\frac{v_\ell(x)-v_\ell(y)}{\abs{x-y}^{d+2s}} dy\right)^2 dx
=: S_{5,1} + S_{5,2}. 
\end{align*}
In fact, $S_{5,1} = 0$. To see this, we use that, for $x$, $y \in T$, 
we have $v_\ell(x)-v_\ell(y) = \nabla v_\ell|_T\cdot(x-y)$,
where $\nabla v_\ell|_T \in \R^d$ is constant. Using polar coordinates, 
the principal value integral can be computed and is equal to zero as in \eqref{eq:pvcompute}. 
To bound $S_{5,2}$ we note that 
our assumption $s>1/2$ implies that $r^{-2s+1}$ vanishes at infinity.
With the Lipschitz continuity of $v_\ell$, we estimate $S_{5,2}$ as in \eqref{eq:estLipcont} as 
\begin{align}\label{eq:estintfinal}
S_{5,2} &\lesssim 
\norm{\nabla v_\ell}_{L^{\infty}(\Omega_\ell(T))}^2
\int_T\omega_\ell(x)^{2\beta}\left(\int_{B_{\dist(x,\partial T)}(x)^c}
\frac{1}{\abs{x-y}^{d+2s-1}} dy\right)^2 dx \nonumber \\
&\qquad \lesssim \norm{\nabla v_\ell}_{L^{\infty}(\Omega_\ell(T))}^2
\int_T\omega_\ell(x)^{2\beta}\left(\int_{\dist(x,\partial T)}^{\infty}
\frac{1}{r^{2s}} dy\right)^2 dx \nonumber
\\ &\qquad = \norm{\nabla v_\ell}_{L^{\infty}(\Omega_\ell(T))}^2
\int_T\omega_\ell(x)^{2\beta-4s+2}dx 
\lesssim h_\ell(T)^{2\beta-4s+2+d}\norm{\nabla v_\ell}_{L^{\infty}(\Omega_\ell(T))}^2 \nonumber \\
&\qquad\lesssim h_\ell(T)^{2\beta-2s}\norm{v_\ell}_{H^{s}(\Omega_\ell(T))}^2,
\end{align}
where the last two estimates follow from a direct computation of the integral over the distance function 
and an inverse inequality, where we used that the ratio of neighboring elements is bounded by a constant.

It remains to estimate $S_{4}$ in \eqref{eq:investslarge1}, 
which is similar to the integral above.  Since $\chi_T \equiv 1$ on $T$, we get
\begin{align*}
&\int_T\omega_\ell(x)^{2\beta}(v_\ell(x)-c_T)^2\left(\text{P.V.}\int_{\R^d}\frac{\chi_T(x)-\chi_T(y)}{\abs{x-y}^{d+2s}} dy\right)^2 dx
\\
&\quad=
\int_T\omega_\ell(x)^{2\beta}(v_\ell(x)-c_T)^2
\left(\int_{B_{\dist(x,\partial T)}(x)^c}\frac{\chi_T(x)-\chi_T(y)}{\abs{x-y}^{d+2s}} dy\right)^2 dx.
\end{align*}

Using the Lipschitz continuity of $\chi_T$, polar coordinates, and  
the Poincar\'e inequality, we therefore may estimate as in \eqref{eq:estintfinal}
\begin{align*}
&\int_T\omega_\ell(x)^{2\beta}(v_\ell(x)-c_T)^2
\left(\int_{B_{\dist(x,\partial T)}(x)^c}\frac{\chi_T(x)-\chi_T(y)}{\abs{x-y}^{d+2s}} dy\right)^2 dx 
\\ &\quad\lesssim 
\norm{\nabla \chi_T}_{L^{\infty}(\R^d)}^2
\int_T\omega_\ell(x)^{2\beta}(v_\ell(x)-c_T)^2\left(\int_{B_{\dist(x,\partial T)}(x)^c}
\frac{1}{\abs{x-y}^{d+2s-1}}dy\right)^2 dx  
\\ &\quad \lesssim h_\ell(T)^{-2}
\int_T\omega_\ell(x)^{2\beta-4s+2}(v_\ell(x)-c_T)^2 dx 
\lesssim h_\ell(T)^{2\beta-2s} \norm{v_\ell}^2_{H^s(\Omega_\ell(T))}. 
\end{align*}
Putting the bounds for $S_4$ and $S_5$ together and summing over all elements, 
leads to the claimed inverse estimate.
\end{proof}

\subsubsection{The far-field}

In order to treat the far-field part in the proof of the inverse estimate
of Theorem~\ref{theorem:invest}, we utilize the interpretation of the fractional Laplacian as the
Dirichlet-to-Neumann map for the extension problem \eqref{eq:extension}. This leads us 
to the problem of controlling second derivatives of the solution to the extension problem. 
This is done in the following Lemma~\ref{lem:CaccType}, 
which is proved with the techniques of tangential difference quotients, typical for elliptic PDEs, 
see, e.g., \cite{Evans}.

%
%

\begin{lemma}\label{lem:CaccType} 
Let $\zeta \in C^\infty_0(\R^{d+1})$ and $B \subset B' \subset \R^{d} \times \R^+$ with 
$\zeta|_B \equiv 1$ and $\supp \zeta \cap \R^d \times \R^+ \subset B'$. 
Let $U \in {\mathcal B}^1_\alpha(\R^d \times \R^+)$ satisfy \eqref{eq:extension-a} and 
$\operatorname{tr} (U \zeta)  = 0$. 
Then,
\begin{align}
 \norm{D_x(\nabla U)}_{L^2_\alpha(B)} \leq 2 \|\nabla \zeta\|_{L^\infty(\R^{d+1})}  \norm{\nabla U}_{L^2_\alpha(B')}.
\end{align}
\end{lemma}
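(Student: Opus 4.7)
The plan is to adapt the classical tangential difference-quotient / Caccioppoli technique to the $A_2$-weighted elliptic operator $\div(\mathcal{Y}^\alpha\nabla\cdot)$. The key structural fact is that $\mathcal{Y}^\alpha$ depends only on the extension variable $\mathcal{Y}$, so the operator commutes with any translation in a tangential direction $e_i$, $i\in\{1,\dots,d\}$. Consequently, if $U$ solves \eqref{eq:extension-a}, so does every tangential difference quotient of $U$, and we Caccioppoli-bound the gradient of such a quotient to gain tangential regularity.

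For $i\in\{1,\dots,d\}$ and small $h>0$ set
\[
D^h_iU(x,\mathcal{Y}):=h^{-1}\bigl(U(x+he_i,\mathcal{Y})-U(x,\mathcal{Y})\bigr),
\]
which satisfies $\div(\mathcal{Y}^\alpha\nabla D^h_iU)=0$ in $\R^d\times\R^+$. I would use $V:=\zeta^2 D^h_iU$ as a test function in the corresponding weak form. The hypothesis $\operatorname{tr}(U\zeta)=0$, i.e.\ $u\,\zeta(\cdot,0)\equiv 0$, enters to kill the Neumann pairing at $\{\mathcal{Y}=0\}$: formally, differentiating $\zeta u=0$ in $x_i$ gives $\zeta\,\partial_{x_i}u=-(\partial_{x_i}\zeta)\,u$, so $\zeta^2\partial_{x_i}u=-(\partial_{x_i}\zeta)(\zeta u)\equiv 0$; the discrete counterpart for the difference quotient is an analogous product-rule identity whose error vanishes in the limit $h\to 0^+$. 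Granting this, the weak form yields
\[
\int_{\R^d\times\R^+}\mathcal{Y}^\alpha\,\nabla(D^h_iU)\cdot\nabla(\zeta^2 D^h_iU)\,dx\,d\mathcal{Y}=0.
\]
Expanding $\nabla V=\zeta^2\nabla(D^h_iU)+2\zeta(D^h_iU)\nabla\zeta$, isolating the positive quadratic term, and applying Cauchy--Schwarz in $L^2_\alpha$ give after absorption
\[
\norm{\zeta\,\nabla D^h_iU}_{L^2_\alpha}\le 2\,\norm{\nabla\zeta}_{L^\infty(\R^{d+1})}\,\norm{D^h_iU}_{L^2_\alpha(B')}.
\]
Passing to the limit $h\to 0^+$ (using the standard difference-quotient convergence in the $A_2$-weighted $L^2$-spaces) and exploiting $\zeta\equiv 1$ on $B$ yields $\norm{\nabla\partial_{x_i}U}_{L^2_\alpha(B)}\le 2\norm{\nabla\zeta}_{L^\infty}\norm{\partial_{x_i}U}_{L^2_\alpha(B')}$. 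Summing over $i=1,\dots,d$ and using $\sum_i|\partial_{x_i}U|^2\le|\nabla U|^2$ delivers the claim with the stated constant.

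The main obstacle is the rigorous handling of the Neumann boundary pairing at $\{\mathcal{Y}=0\}$, since $\operatorname{tr} V=\zeta^2 D^h_iu$ is not identically zero for finite $h$. The hypothesis $\operatorname{tr}(U\zeta)=0$ does not directly annihilate this trace, but --- through a discrete product-rule identity --- reduces it to a term supported on an $h$-thin transition layer of $\zeta(\cdot,0)$, whose contribution vanishes as $h\to 0^+$. The remaining ingredients (absorption and difference-quotient compactness in the weighted setting) are routine for Muckenhaupt $A_2$-weighted elliptic equations.
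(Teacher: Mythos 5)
Your argument is essentially the same as the paper's: the paper tests the weak form for $U$ with $V=D_{x_j}^{-\tau}\bigl(\zeta^2 D_{x_j}^\tau U\bigr)$ and performs a discrete integration by parts, which is algebraically equivalent to your testing the weak form for the translated solution $D_i^hU$ with $\zeta^2 D_i^hU$; both then expand, apply Young's inequality, absorb, and pass $h\to0$. Your remark that $\operatorname{tr}V=\zeta^2(\cdot,0)D_i^h u$ need not vanish for finite $h$ is a fair point that the paper also glosses over (it simply asserts the trace vanishes), so your proof matches the paper's level of rigor on that subtlety.
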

\begin{proof}
With the unit-vector $e_{x_j}$ in the $j$-th coordinate and $\tau>0$, we define 
the difference quotient 
\begin{align*}
D_{x_j}^\tau w(x) := \frac{w(x+\tau e_{x_j})-w(x)}{\tau}.
\end{align*}
We use the test-function $V = D_{x_j}^{-\tau}(\zeta^2 D_{x_j}^\tau U)$ in the weak formulation of \eqref{eq:extension}.
Noting that $V|_{\partial(\Omega \times \R^+)} = 0$ due to the assumption 
$\operatorname*{tr}(U\zeta) = 0$, we therefore obtain
\begin{align*}
0 = \int_{\Omega \times \R^+} \mathcal{Y}^{\alpha} \nabla U \cdot\nabla V  d\mathcal{Y}\, dx &= 
\int_{B'} D_{x_j}^\tau (\mathcal{Y}^\alpha \nabla U) \cdot \nabla (\zeta^2 D_{x_j}^\tau U) \,d\mathcal{Y}\, dx \\ &= 
 \int_{B'} \mathcal{Y}^\alpha D_{x_j}^\tau (\nabla U) \cdot \left(\zeta^2 \nabla D_{x_j}^\tau U + 2\zeta \nabla \zeta D_{x_j}^\tau U\right) d\mathcal{Y} \, dx\\
 & =
  \int_{B'} \mathcal{Y}^\alpha \zeta^2  D_{x_j}^\tau (\nabla U) \cdot D_{x_j}^\tau (\nabla U)\, d\mathcal{Y}\,dx + 
  \int_{B'} 2 \mathcal{Y}^\alpha\zeta \nabla\zeta \cdot D_{x_j}^\tau (\nabla U)  D_{x_j}^\tau U\, d\mathcal{Y}\, dx. 
\end{align*}
Therefore, with Young's inequality, we have the estimate
\begin{align*}
 \norm{\zeta D_{x_j}^\tau(\nabla U)}_{L^2_\alpha(B')}^2 &\leq
 2 \abs{  \int_{B'} \mathcal{Y}^\alpha \zeta \nabla\zeta \cdot  D_{x_j}^\tau (\nabla U) D_{x_j}^\tau U d\mathcal{Y} \, dx} \\
 &\leq \frac{1}{2}  \norm{\zeta D_{x_j}^\tau(\nabla U)}_{L^2_\alpha(B')}^2 
 + 2 \norm{\nabla \zeta}_{L^{\infty}(B')}^2\norm{D_{x_j}^\tau U}_{L^2_\alpha(B')}^2.
\end{align*}
Absorbing the first term and taking the limit $\tau \rightarrow 0$, 
we obtain the sought inequality for all second derivatives in $x$.
\end{proof}

The following Lemma~\ref{lem:farfield} provides the required estimate for the far-field.

\begin{lemma}\label{lem:farfield}
 Let $v \in \widetilde{H}^{s}(\Omega)$ and the far-field $v_{\rm far}^T$ be given by \eqref{eq:farfieldDef}. Let $\beta = 0$ for 
 $0<s<1/2$ and $\beta = s-1/2$ for $1/2<s<1$. Then, the estimate
 \begin{align}
\label{eq:lem:farfield-10}
\sum_{T\in \T_\ell}\norm{\widetilde h_\ell^s \; v_{\rm far}^T}_{L^2(T)}^2 \leq C \norm{v}_{\widetilde H^s(\Omega)}^2
\end{align}
holds with a constant depending only on $\Omega$, $d$, $s$, and the $\gamma$-shape regularity of $\T_\ell$.
\end{lemma}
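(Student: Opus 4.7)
The plan is to exploit the Caffarelli--Silvestre extension \eqref{eq:extension} together with the Caccioppoli-type bound of Lemma~\ref{lem:CaccType}. For fixed $T \in \T_\ell$, set $w_T := (1-\chi_T)(v-c_T)$ and let $U_T \in \mathcal{B}^1_\alpha(\R^d\times\R^+)$ be its extension, so that $\operatorname{tr} U_T = w_T$ and $v_{\rm far}^T = -d_s\lim_{\mathcal{Y}\to 0^+}\mathcal{Y}^\alpha\partial_\mathcal{Y}U_T$. Because $\chi_T \equiv 1$ on the inner set $B'$ with $T \subset B' \subset \Omega_\ell(T)$, the trace $w_T$ vanishes on $B'$. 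A smooth cut-off $\zeta$ equal to $1$ on a set $B$ containing $T\times[0,c\,h_\ell(T)]$, supported in $B'\times[0,C\,h_\ell(T)]$, and satisfying $\|\nabla\zeta\|_{L^\infty}\lesssim h_\ell(T)^{-1}$, then also satisfies $\operatorname{tr}(\zeta U_T)=0$, so Lemma~\ref{lem:CaccType} applied to $\zeta U_T$ gives $\|D_x\nabla U_T\|_{L^2_\alpha(B)}\lesssim h_\ell(T)^{-1}\|\nabla U_T\|_{L^2_\alpha(B')}$. Rewriting \eqref{eq:extension-a} as $\partial_{\mathcal{Y}\mathcal{Y}}U_T = -\alpha\,\mathcal{Y}^{-1}\partial_\mathcal{Y}U_T-\Delta_x U_T$ transfers this control to all second derivatives of $U_T$ on $B$.

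Next, I would apply a weighted trace inequality (in the spirit of those for the extension problem, cf.\ \cite{KarMel18}) to bound the Neumann trace $\mathcal{Y}^\alpha\partial_\mathcal{Y}U_T|_{\mathcal{Y}=0}$ on $T$ by the weighted $L^2_\alpha$-norms of $\nabla U_T$ and $D^2 U_T$ on $B$. Together with the Caccioppoli estimate of the previous step and the correct power of $h_\ell(T)$ provided by scaling, this should produce
\begin{align*}
\|\widetilde h_\ell^s v_{\rm far}^T\|_{L^2(T)}^2 \lesssim \|\nabla U_T\|_{L^2_\alpha(B')}^2.
\end{align*}
For $0<s\le 1/2$ the factor $\widetilde h_\ell^s = h_\ell^s$ suffices, whereas for $1/2<s<1$ the extra weight $\omega_\ell^{s-1/2}$ is precisely what is needed to compensate for the loss of $\mathcal{Y}$-integrability of $\mathcal{Y}^\alpha\partial_\mathcal{Y}U_T$ near $\mathcal{Y}=0$.

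Finally, I would use the variational characterization of the Caffarelli--Silvestre extension, namely $\|\nabla U_T\|_{L^2_\alpha(\R^d\times\R^+)}^2\simeq\|w_T\|_{H^s(\R^d)}^2$, combined with a product-rule expansion of $w_T=(1-\chi_T)(v-c_T)$: the cross terms are controlled via $\|\nabla\chi_T\|_{L^\infty}\lesssim h_\ell(T)^{-1}$ and the Poincar\'e bound of Lemma~\ref{lemma:poincare}, yielding $\|w_T\|_{H^s(\R^d)}^2\lesssim\|v\|_{\widetilde H^s(\Omega_\ell(T))}^2$. Summation over $T$, exploiting the finite overlap of the patches $\Omega_\ell(T)$ (depending only on $\gamma$), proves \eqref{eq:lem:farfield-10}. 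The hard part will be pinning down a weighted trace inequality whose powers of $h_\ell$ and $\omega_\ell^\beta$ match on the nose in the regime $s>1/2$; boundary elements with $c_T=0$ are a minor additional issue, handled by the boundary version of Lemma~\ref{lemma:poincare} which uses $v|_{\partial\Omega}=0$ instead of a zero-mean condition.
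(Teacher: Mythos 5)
Your overall strategy mirrors the paper's: pass to the Caffarelli--Silvestre extension of the far-field data, localize with a cut-off, apply the Caccioppoli-type bound of Lemma~\ref{lem:CaccType}, and recover the Neumann trace via a weighted trace inequality. However, there are several genuine gaps in the details.

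The most serious one is in the final summation. You write $\|w_T\|_{H^s(\R^d)}^2 \lesssim \|v\|_{\widetilde H^s(\Omega_\ell(T))}^2$ and then sum over $T$ using finite overlap of the patches $\Omega_\ell(T)$. This cannot be right: $w_T = (1-\chi_T)(v-c_T)$ agrees with $v - c_T$ on all of $\Omega\setminus\Omega_\ell(T)$ (since $\chi_T$ is supported \emph{inside} the patch), so $\|w_T\|_{H^s(\R^d)}$ is a \emph{global} quantity comparable to $\|v\|_{\widetilde H^s(\Omega)}$, not a local one. With this bound, summing over all $\#\T_\ell$ elements would produce a factor of $\#\T_\ell$ on the right-hand side, which destroys the estimate. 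You actually \emph{had} a local bound two lines earlier, namely $\|\widetilde h_\ell^s v_{\rm far}^T\|_{L^2(T)}^2 \lesssim \|\nabla U_T\|^2_{L^2_\alpha(B')}$ with $B'$ a cylinder of height $\sim h_\ell(T)$ over a neighborhood of $T$; the step where you discard $B'$ in favor of the full half-space in the variational characterization is where the locality, and the argument, break. You need to keep the estimate local in the vertical direction (the cylinder has height $\sim h_\ell(T)$) as well as horizontal, and the summation then needs a more careful argument than mere finite overlap.

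Two further issues, less fatal but still present. First, you argue that the equation $\partial_{\mathcal{Y}\mathcal{Y}}U = -\alpha\,\mathcal{Y}^{-1}\partial_\mathcal{Y}U - \Delta_x U$ ``transfers the control to all second derivatives of $U_T$.'' This is questionable: $\mathcal{Y}^{-1}\partial_\mathcal{Y}U$ is generically not in $L^2_\alpha$ near $\mathcal{Y}=0$, so you cannot bound $\partial_{\mathcal{Y}\mathcal{Y}}U$ in $L^2_\alpha$ this way. The paper avoids this cleanly by formulating the multiplicative trace inequality for the flux variable $z := \mathcal{Y}^\alpha\partial_\mathcal{Y}U$ in the \emph{conjugate} weighted space $L^2_{-\alpha}$, so that the equation gives $-\partial_\mathcal{Y} z = \mathcal{Y}^\alpha\Delta_x U$, and only $\Delta_x U \in L^2_\alpha$ --- precisely what Lemma~\ref{lem:CaccType} controls --- is needed. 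You should use the trace inequality of \cite[Lemma~3.7]{KarMel18} applied with weight exponent $-\alpha$ and work with $z$, not with $\partial_{\mathcal{Y}\mathcal{Y}}U$.

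Second, you treat the weight $\omega_\ell^{s-1/2}$ as essential for $s>1/2$ and flag matching its power in a weighted trace inequality as ``the hard part.'' In fact $\omega_\ell(x) \le \dist(x,\partial T) \lesssim h_\ell(T)$ for $x\in T$, so $\widetilde h_\ell^s = h_\ell^{s-\beta}\omega_\ell^\beta \lesssim h_\ell^s$. It therefore suffices to prove \eqref{eq:lem:farfield-10} with $\widetilde h_\ell^s$ replaced by the stronger weight $h_\ell^s$, and the $\omega_\ell^\beta$ weight plays no role for the far-field at all (it is only needed to tame the blow-up of the \emph{near}-field at the mesh skeleton). The ``hard part'' you worry about is thus a red herring.
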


\begin{proof}
Since $\beta \ge 0$, we have $\widetilde h^s \lesssim h^s$ so that it suffices to show the estimate
\eqref{eq:lem:farfield-10} with $\widetilde h^s$ replaced with $h^s$. 

Since the fractional Laplacian is the Dirichlet-to-Neumann map 
for the extension problem \eqref{eq:extension}, we need to control the 
Neumann data of the extension problem, i.e.,
the trace of $z(x,\mathcal{Y}):=\mathcal{Y}^\alpha \partial_\mathcal{Y} U(x,\mathcal{Y})$ at $\mathcal{Y}=0$, 
where $U$ is the solution of \eqref{eq:extension} with boundary data $u = (1-\chi_T)v + \chi_T c_T$.
We note that the definition of the cut-off function $\chi_T$ and the constant $c_T$ 
(in particular: $c_T = 0$ if $\overline{T} \cap \partial \Omega \neq \emptyset$) imply
$v \in \widetilde{H}^s$. Moreover, $(-\Delta)^s c_T \equiv 0$ implies that 
$v_{\rm far}^T = (-\Delta)^s u$.

Given $T \in \T_\ell$, the $\gamma$-shape regularity of $\TT_\ell$ implies
the existence of sets $T \subset B_0 \subset B_0^\prime \subset \Omega_\ell(T)$ with 
$\operatorname{dist}(B_0,\partial B_0^\prime) \sim h_\ell(T)$ and implied constant
depending solely on the $\gamma$-shape regularity. Additionally, we may require
that $B_0^\prime \subset \{\chi_T \equiv 1\}$. Setting 
$B:= B_0 \times (0,h_\ell(T))$, 
$B':= B_0' \times (0,2 h_\ell(T))$, we may select a cut-off function 
$\zeta \in C^\infty_0(\R^{d+1})$ with $\zeta \equiv 1$ on $B$, 
$\supp \zeta \cap \R^{d} \times \R^+ \subset B'$ and $\|\nabla \zeta\|_{L^\infty(\R^{d+1})} \lesssim h_\ell(T)^{-1}$, 
where again the implied constant depends solely on the $\gamma$-shape regularity of $\T_\ell$. 
The multiplicative trace inequality from \cite[Lemma~3.7]{KarMel18} applied with $-\alpha$ provides
\begin{align*}
 \norm{z(\cdot,0)}_{L^2(T)} &= \norm{\zeta(\cdot,0)z(\cdot,0)}_{L^2(T)} 
 \lesssim \norm{\zeta z}_{L^2_{-\alpha}(B)}+
 \norm{\zeta z}_{L^2_{-\alpha}(B)}^{(1+\alpha)/2}
 \norm{\partial_\mathcal{Y}(\zeta z)}_{L^2_{-\alpha}(B)}^{(1-\alpha)/2}
\\&\lesssim
 h_\ell(T)^{-(1-\alpha)/2}\norm{z}_{L^2_{-\alpha}(B)}+
 \norm{z}_{L^2_{-\alpha}(B)}^{(1+\alpha)/2}
 \norm{\partial_\mathcal{Y} z}_{L^2_{-\alpha}(B)}^{(1-\alpha)/2}.
\end{align*} 
The equation $-\div(\mathcal{Y}^\alpha \nabla U) = 0$ implies that $-\partial_\mathcal{Y} z = \mathcal{Y}^\alpha \Delta_x U$, so we obtain
with $s = (1-\alpha)/2$
\begin{align*}
 \norm{z(\cdot,0)}_{L^2(T)} &\lesssim h_\ell(T)^{-s}
 \norm{\mathcal{Y}^\alpha \partial_\mathcal{Y} U}_{L^2_{-\alpha}(B)}+
 \norm{\mathcal{Y}^\alpha \partial_\mathcal{Y} U}_{L^2_{-\alpha}(B)}^{(1+\alpha)/2}
 \norm{\mathcal{Y}^\alpha \Delta_x U}_{L^2_{-\alpha}(B)}^{(1-\alpha)/2} \\
 &=
 h_\ell(T)^{-s}
 \norm{\partial_\mathcal{Y} U}_{L^2_{\alpha}(B)}+
 \norm{\partial_\mathcal{Y} U}_{L^2_{\alpha}(B)}^{(1+\alpha)/2}
 \norm{\Delta_x U}_{L^2_{\alpha}(B)}^{(1-\alpha)/2}.
\end{align*}
It remains to control $\Delta_x U$ in the weighted $L^2$-norm. This is done with Lemma~\ref{lem:CaccType}, and we get
\begin{align*}
 \norm{z(\cdot,0)}_{L^2(T)} &\lesssim h_\ell(T)^{-s}\norm{\partial_\mathcal{Y} U}_{L^2_\alpha(B)}+
 h_\ell(T)^{-s}\norm{\partial_\mathcal{Y} U}_{L^2_\alpha(B)}^{(1-\alpha)/2}
 \norm{\nabla U}_{L^2_\alpha(B)}^{(1+\alpha)/2}
 \\ &\lesssim
   h_\ell(T)^{-s}\norm{\nabla U}_{L^2_\alpha(\Omega_\ell(T)\times(0,h_\ell(T)))}.
\end{align*}
With a standard {\sl a priori} estimate, the energy norm on the right-hand side can be estimated by the Dirichlet data 
$(1-\chi_T)v +\chi_T c_T$, and this finally implies
\begin{align*}
 \sum_{T\in \T_\ell}  h_\ell(T)^{2s} \norm{v_{\rm far}^T}_{L^2(T)}^2 &\lesssim \norm{\nabla U}^2_{L^2_\alpha(\Omega\times \R^+)}
\\&\lesssim \norm{(1-\chi_T)v +\chi_T c_T}_{\widetilde H^s(\Omega)}^2 \lesssim \norm{v}^2_{\widetilde H^s(\Omega)}+
\norm{\chi_T(v- c_T)}_{\widetilde H^s(\Omega)}^2
\lesssim \norm{v}^2_{\widetilde H^s(\Omega)},
\end{align*}
where the last estimate can be found in the proof of Lemma~\ref{lem:nearfield}.
This proves the estimate for the far-field.
\end{proof}

\section{Numerics}
\label{sec:numerics}

We illustrate our theoretical results of the previous sections with some numerical examples in two 
dimensions. For more numerical examples about adaptive methods for fractional diffusion, we refer to 
\cite{AinGlu17}.

\subsection{Implementational Issues}
We implement the crucial steps SOLVE (step (\ref{item:alg-i})) and ESTIMATE (step (\ref{item:alg-ii})) in Algorithm~\ref{algorithm} in MATLAB R2018a as follows:

\begin{itemize}
 \item SOLVE: To obtain the lowest-order discrete solution $u_\ell \in S^{1,1}_0(\T_\ell)$, 
 we use the existing MATLAB-code from \cite{ABB17}, where the unbounded domain 
 $\R^d$ is replaced by a (large) circle around 
 the computational domain $\Omega$. The integrals of the system matrix are computed with high precision quadrature rules. 
For domains $\Omega$ with curved boundary $\partial\Omega$, the boundary is approximated by a piecewise linear 
interpolant, which introduces a variational crime. Nevertheless, this approximation improves as the mesh size near $\partial\Omega$
is reduced.
 
 \item ESTIMATE: For the error estimator, we need to compute the local contributions 
 $$\eta_\ell(T)= \norm{h_\ell^{s-\beta}\omega_\ell^\beta ((-\Delta)^s u_\ell - f)}_{L^2(T)},$$
 with $\beta = 0$ for $0<s\leq 1/2$ and $\beta = s-1/2$ for $1/2 < s < 1$.
 In Section~\ref{sec:mainresults}, we noted that $(-\Delta)^s u_\ell(x)$ may blow up as $x$ tends to the mesh skeleton.  
The integral on the triangle $T$ is transformed to an integral on a square by means of the Duffy transformation. 
There, the integral is approximated by a quadrature on a tensor product composite Gauss rule on meshes that are 
refined geometrically towards the edges of the square.
Evaluating the residual requires  
evaluating the fractional Laplacian applied to the discrete solution in each quadrature point.  
For $x \in T$, this is done with the pointwise evaluation formula from \cite[Lemma~4]{AinGlu17}
 \begin{align}\label{eq:evalformula}
 \frac{(-\Delta)^s u(x)}{C(d,s)} &= \frac{1}{d+2s-2} \int_{\partial T}\frac{\nabla u|_{T}\cdot n_y}{\abs{x-y}^{d+2s-2}}dy
 -\frac{u(x)}{2s}\int_{\partial T }\frac{(x-y)\cdot n_y}{\abs{x-y}^{d+2s}}dy \nonumber\\
 &\quad  + \sum_{T' \neq T} \left(\frac{1}{2s(d+2s-2)}
 \int_{\partial T'}\frac{\nabla u|_{T'}\cdot n_y}{\abs{x-y}^{d+2s-2}}dy-
 \frac{1}{2s}\int_{\partial T' }u(y)\frac{(x-y)\cdot n_y}{\abs{x-y}^{d+2s}}dy\right),
\end{align}
Here, only integrals over the boundary of each element appear,
which are approximated using the standard 1D-MATLAB quadrature function. 

As two contributions $\eta_\ell(T_1)$, $\eta_\ell(T_2)$ are independent for $T_1 \neq T_2$, the computation 
of the error estimator is easily parallelized, which leads to considerable speed-up in the computations.
\end{itemize}

\begin{remark}
 The computation of the error estimator is fairly expensive due to the need for an accurate quadrature rule
 for the residual. One way to circumvent this is to use a different error estimator inspired by the near-field and far-field
 splitting from Section~\ref{sec:invest}, as well as the observation that the near-field behaves like a 
 power of the distance to the skeleton. We illustrate our ideas in the one dimensional case:
As in formula  \eqref{eq:pvcompute}, 
 the principal value integral over one element $T = [x_0,x_1]$ can be 
 computed exactly for $x \in T$ as 
 \begin{align*}
 \text{P.V.}\; 
 \int_{T}\frac{u_\ell(x)-u_\ell(y)}{\abs{x-y}^{1+2s}}dy =  
 \frac{u_\ell'|_T}{1-2s}\left((x-x_0)^{1-2s} - (x_1-x)^{1-2s}\right).
 \end{align*}
 For all other elements $T' \in \T_\ell \backslash \{T\}$ a similar computation can be made. However, 
 only elements in the patch of $T$ are truly of interest, since the integrand is smooth for all other elements. 
 For the elements in the patch, again, only the terms containing $x_0,x_1$ are relevant, which leads to the 
 splitting
  \begin{align*}
  r_{\rm near}(x) &:= \frac{1}{1-2s}\sum_{T'\in \Omega_\ell(T) \backslash T}\dist(x,\partial T)^{1-2s}[u_\ell']_{T,T'} \\
  r_{\rm far}(x) &:=  \sum_{\widetilde T \notin \Omega_\ell(T)} \left(\frac{1}{2s(2s-1)}
 \int_{\partial \widetilde T}\frac{u_\ell'|_{\widetilde T}\cdot n_y}{\abs{x-y}^{2s-1}}dy-
 \frac{1}{2s}\int_{\partial  \widetilde T }u_\ell(y)\frac{(x-y)\cdot n_y}{\abs{x-y}^{1+2s}}dy\right) - f,
 \end{align*}
 where $[u_\ell']_{T,T'}$ denotes the jump of the derivative across the elements $T$ and $T'$.
Similarly as in Theorem~\ref{theorem:reliable}, the residual can then be bounded by 
  \begin{align*}
  \norm{r_\ell}_{H^{-s}(\Omega)} \lesssim  \norm{h_\ell^s r_{\rm far}}_{L^2(\Omega)} + 
\sum_{T \in \T_\ell}\sum_{T'\in \Omega_\ell(T) \backslash T}\abs{[u_\ell']_{T,T'}}h_\ell(T)^{3/2-s}.
 \end{align*}
  The advantage of this approach is that the far-field, written in terms of formula \eqref{eq:evalformula}, 
  can be cheaply computed using standard 
 Gaussian quadrature and for the near-field only the jumps across the elements need to be computed.
\eremk
\end{remark}

We present convergence plots in the energy norm, where we use that --- due to the Galerkin orthogonality
and symmetry of $a(\cdot,\cdot)$ --- 
the error can be computed as 
\begin{align}
\norm{u-u_\ell}^2_{\widetilde{H}^s(\Omega)} = \skp{f,u}_{L^2(\Omega)}-\skp{f,u_\ell}_{L^2(\Omega)}.
\end{align}

\subsection{Example 1 -- Circle}
We start with an example on the unit circle $\Omega = B_1(0)$ and choose a constant right-hand side 
$f = 2^{2s}\Gamma(1+s)^2$ with exact solution given by (see, e.g., \cite{AinGlu17,BBNOS17}) 
\begin{align*}
 u(x) = (1-\abs{x}^2)_+^s \qquad \text{with} \;\; g_+ = \max\{g,0\}. 
\end{align*}
Using polar coordinates, we can easily compute the exact energy as 
$$a(u,u) = \int_{B_1(0)}fu\, dx = 2^{2s}\Gamma(1+s)^2\frac{2\pi}{2s+2}.$$

The exact solution is smooth inside the unit circle, but non-smooth towards the whole boundary,
which is typical for solutions of our model problem \eqref{eq:modelproblem}. Therefore, we expect 
that the adaptive algorithm refines the mesh towards the whole boundary, which is indeed the case as shown in 
Figure~\ref{fig:grid}. 

\begin{figure}[ht]
\includegraphics[width=0.32\textwidth]{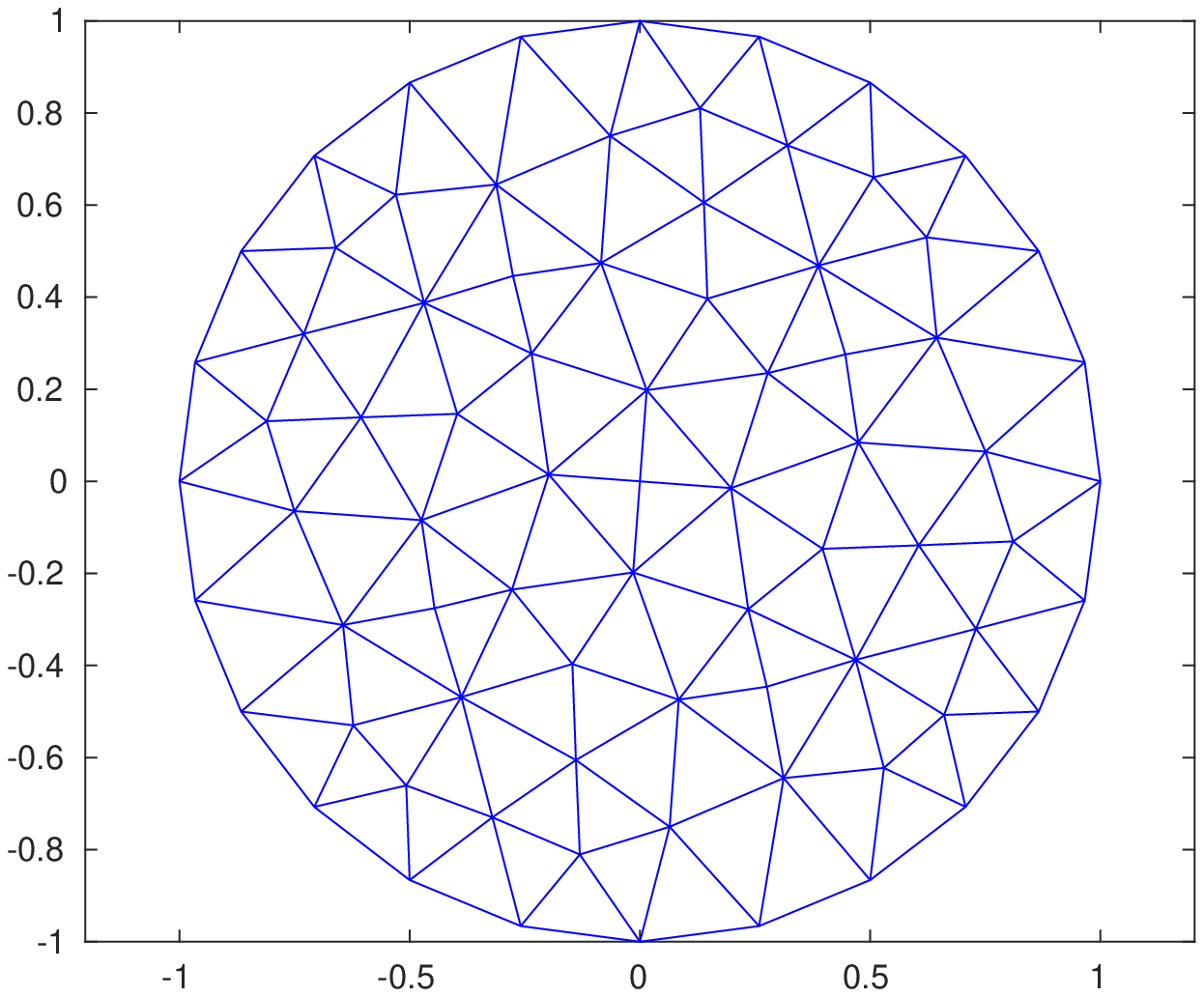}
\includegraphics[width=0.32\textwidth]{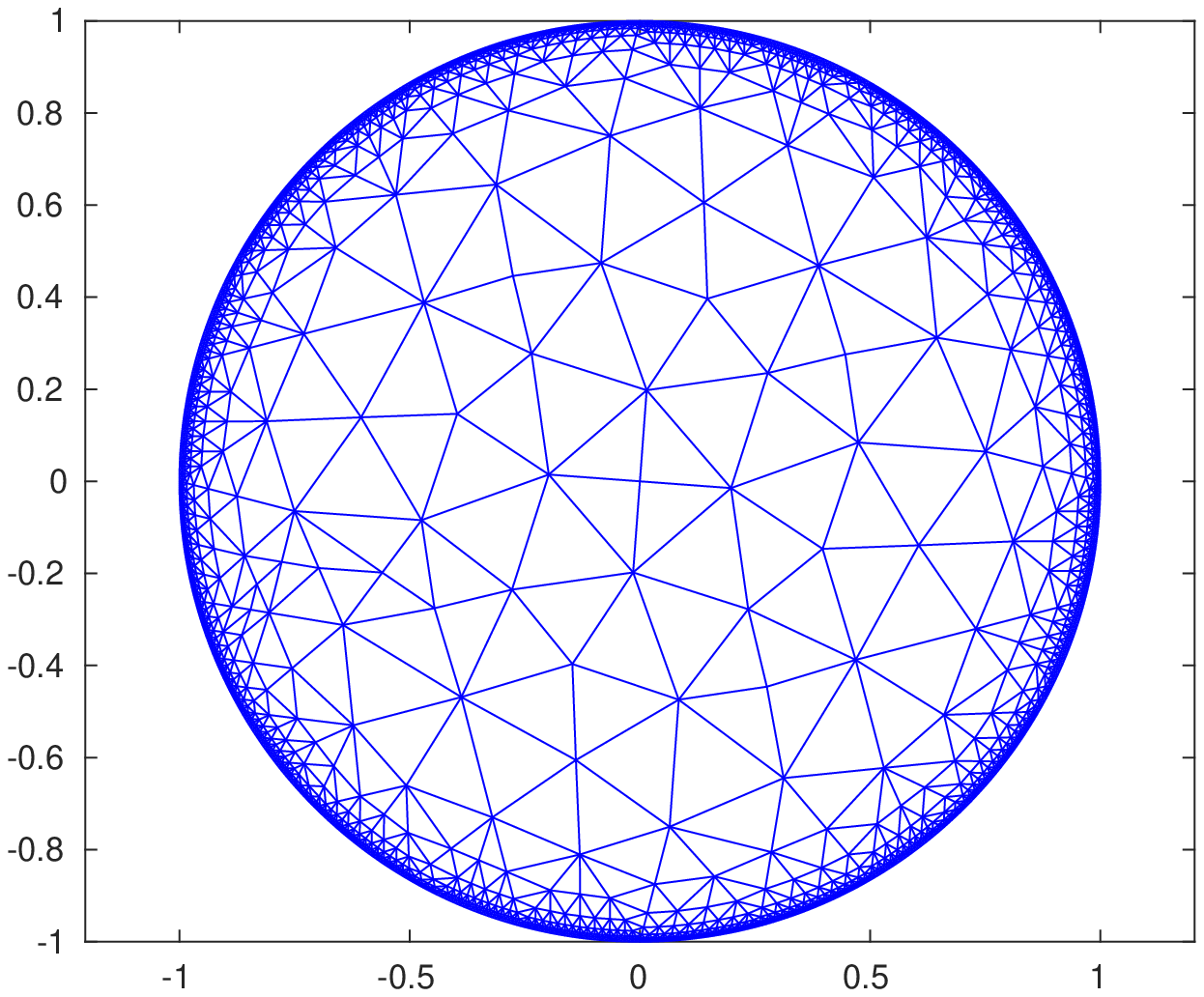}
\includegraphics[width=0.32\textwidth]{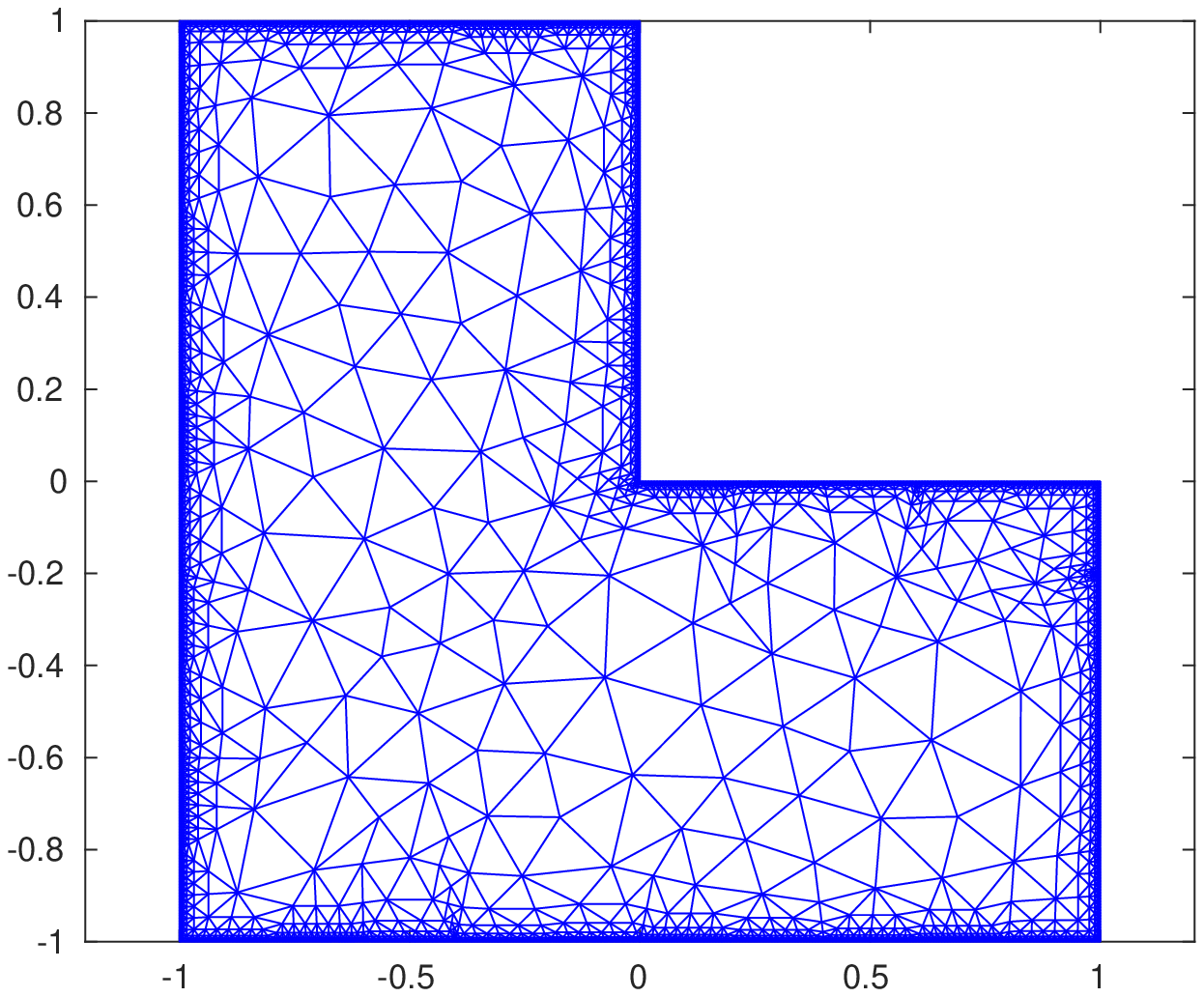}
\centering
\caption{Adaptively generated meshes ($s=0.25$, $\theta=0.3$), left: coarse grid on the circle; middle: adaptively refined mesh on circle; 
right: L-shaped domain.}
 \label{fig:grid}
\end{figure}

Figure~\ref{fig:errorCircleConst} shows the values of the error estimator \eqref{eq:estimator} 
(brown triangles, blue squares) as well as the error (black diamonds, red stars) in the energy norm 
for uniform and adaptive mesh refinement by Algorithm~\ref{algorithm} steered by 
the estimator \eqref{eq:estimator} for $s=0.25$ (with $\theta = 0.3$) and $s=0.75$ (with $\theta = 0.5$).

\begin{figure}[ht]
\begin{minipage}{.50\linewidth}
\includegraphics{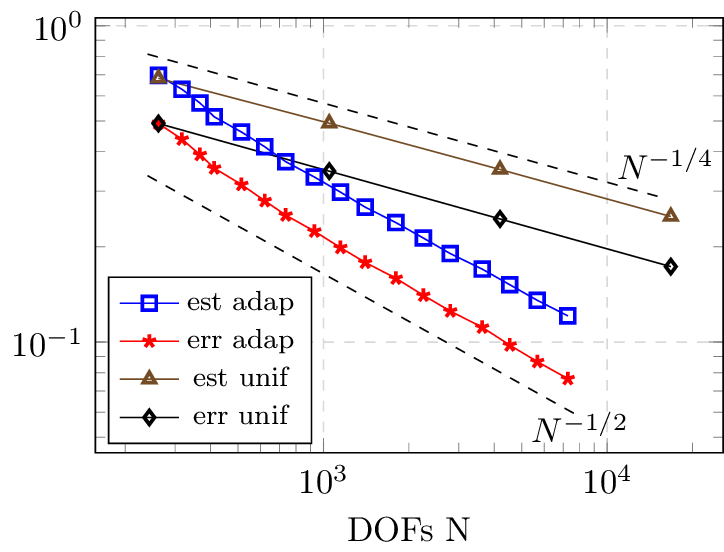}
\end{minipage}
\begin{minipage}{.49\linewidth}
\includegraphics{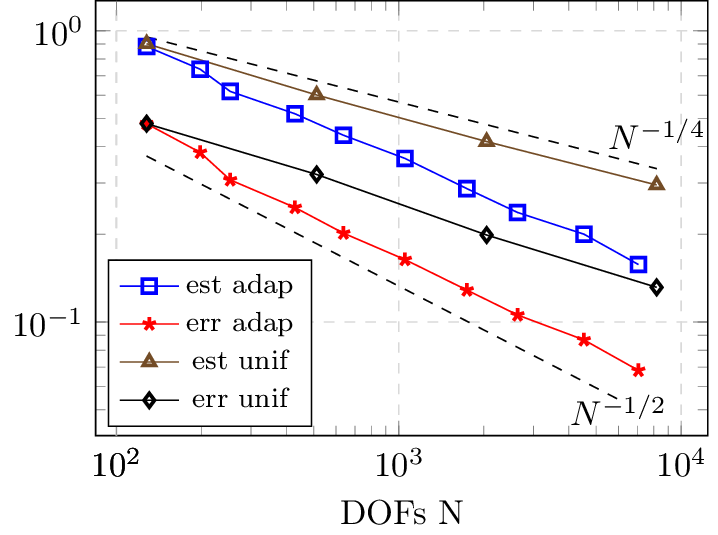}
\end{minipage}
\centering
\caption{Error and error estimator on the unit circle with constant right-hand side
for uniform and adaptive refinement; 
left: $s=0.25$, $\theta = 0.3$; 
right: $s=0.75$, $\theta = 0.5$.}
 \label{fig:errorCircleConst}
\end{figure}

As expected, uniform mesh refinement leads to a reduced rate of $N^{-1/4}$ due to the singularity of 
the solution at the boundary of the circle. However, adaptive refinement restores the optimal 
algebraic rate of $N^{-1/2}$ both for the error and the estimator as predicted by Theorem~\ref{theorem:algorithm}.
\\

In Figure~\ref{fig:errorCircleTheta}, we vary the adaptivity parameter $\theta \in (0,1]$ for fixed $s=0.25$. 
The dashed lines indicate the values of the error estimator and the solid ones the true errors in the energy norm. 
We observe 
optimal algebraic rates for all choices $\theta = 0.3,0.5,0.7$ that are smaller than 1. 
$\theta = 1$ leads to uniform refinement and therefore non-optimal convergence. 
As the error and estimator curves are not distinguishable for $\theta < 1$, this experiment suggests, 
that the condition $\theta \ll 1$ in Theorem~\ref{theorem:algorithm} is not necessary.

\begin{figure}[ht]
\centering
\includegraphics{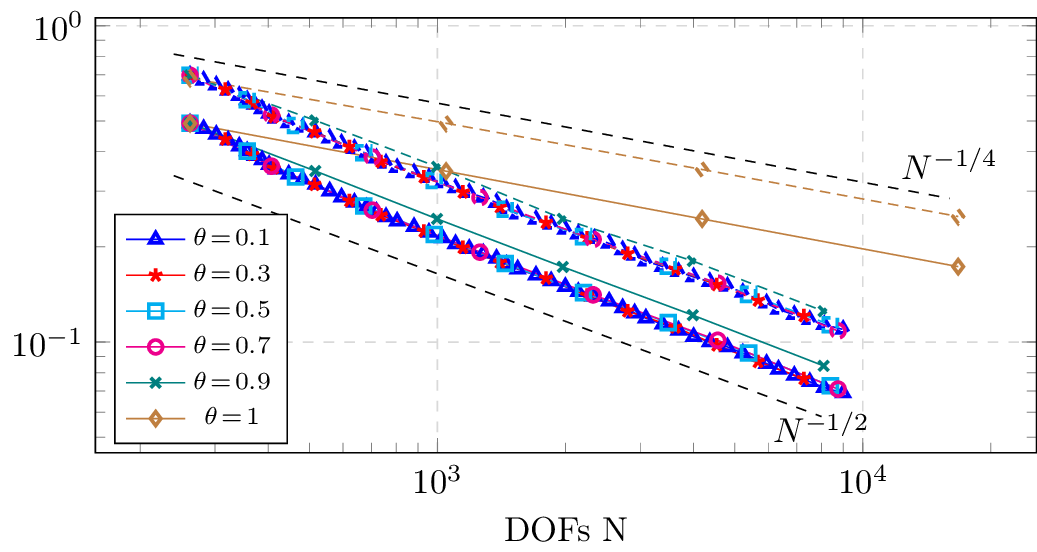}
\caption{Dependence on the adaptivity parameter $\theta$ for $s=0.25$ on the unit circle with constant right-hand side.}
 \label{fig:errorCircleTheta}
\end{figure}

\subsection{Example 2 -- L-shaped domain}
As a second example, we consider the L-shaped domain
$\Omega = (-1,1)^2\backslash [0,1)^2$ depicted in Figure~\ref{fig:grid}.
We prescribe a constant right-hand side $f \equiv 1$.
As the exact solution is unknown, we extrapolate the energy from the computed discrete energy. 

Again, the adaptive algorithm refines the meshes towards the whole boundary due to the singularity of the solution  
at the whole boundary. This is in contrast to the 
known results for the integer Laplacian ($s=1$), where only refinement towards the reentrant corner is observed.

\begin{figure}[ht]
\begin{minipage}{.50\linewidth}
\includegraphics{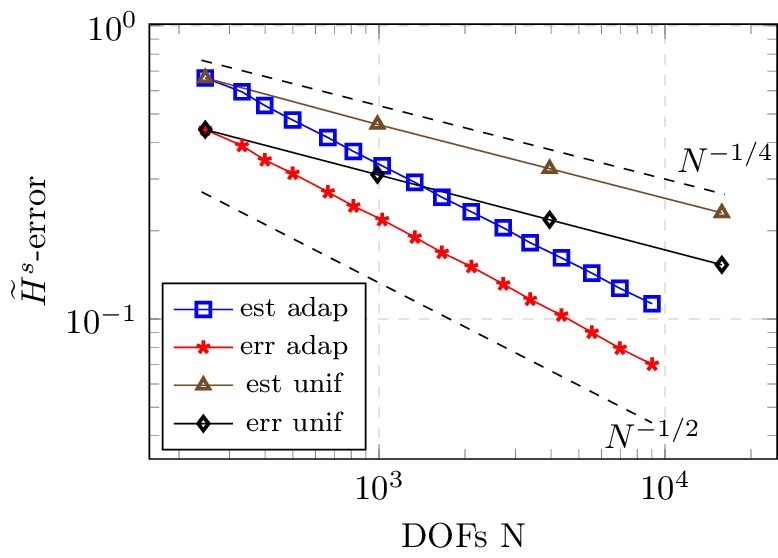}
\end{minipage}
\begin{minipage}{.49\linewidth}
\includegraphics{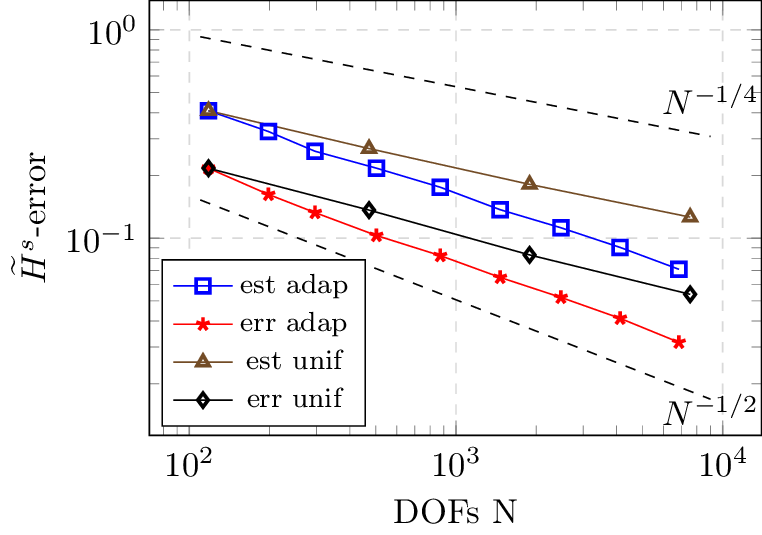}
\end{minipage}
\centering
\caption{Error and error estimator on the L-shaped domain for uniform and adaptive refinement; 
left: $s=0.25$, $\theta = 0.3$; 
right: $s=0.75$, $\theta = 0.5$.}
 \label{fig:errorLshape}
\end{figure}

Figure~\ref{fig:errorLshape} shows the convergence of the error estimator and the error for the 
cases $s=0.25$ ($\theta = 0.3$) and $s=0.75$ ($\theta = 0.5$). We observe optimal convergence rates for the adaptive method and reduced rates 
for uniform refinement just as in Example~1.

\subsection{Example 3 - Circle, discontinuous right-hand side}

As a final example, we again consider the unit circle $\Omega = B_1(0)$, but choose a discontinuous
right-hand side 
$$f(x,y) = \chi_{\{x>0\}}(x,y) = \left\{
\begin{array}{l}
  1 \quad \text{for} \, x>0 \\
 0 \quad \textrm{otherwise}
 \end{array}
 \right..$$
 For this problem, again an exact solution is known, see, e.g., \cite{AinGlu17}. The energy 
 of this solution can be computed using the Meijer G-function as 
 \begin{align*}
  \skp{f,u}_{L^2(\Omega)} = 2^{-2s} \left(\frac{\pi}{4(s+1)\Gamma(s+1)^2} - \frac{1}{\pi}
  G_{4,4}^{3,2} \Big(\begin{array}{llll} 1,&  1+s/2,&  5/2+s,&  5/2+s  \\ 
  2,&  1/2,&  1/2,&  2+s/2\end{array} \Big| -1\Big)\right).
 \end{align*}

 In Figure~\ref{fig:gridDiscontF}, an adaptively generated mesh and the discrete solution are 
 plotted. In contrast to the previous examples, the adaptive algorithm does not only refine the mesh
 at the boundary, but also along the discontinuity of $f$ at the line $x=0$. However, the refinement 
 towards the boundary tends to be stronger than towards the singularity of the right-hand side. 

\begin{figure}[ht]
\includegraphics[width=0.38\textwidth]{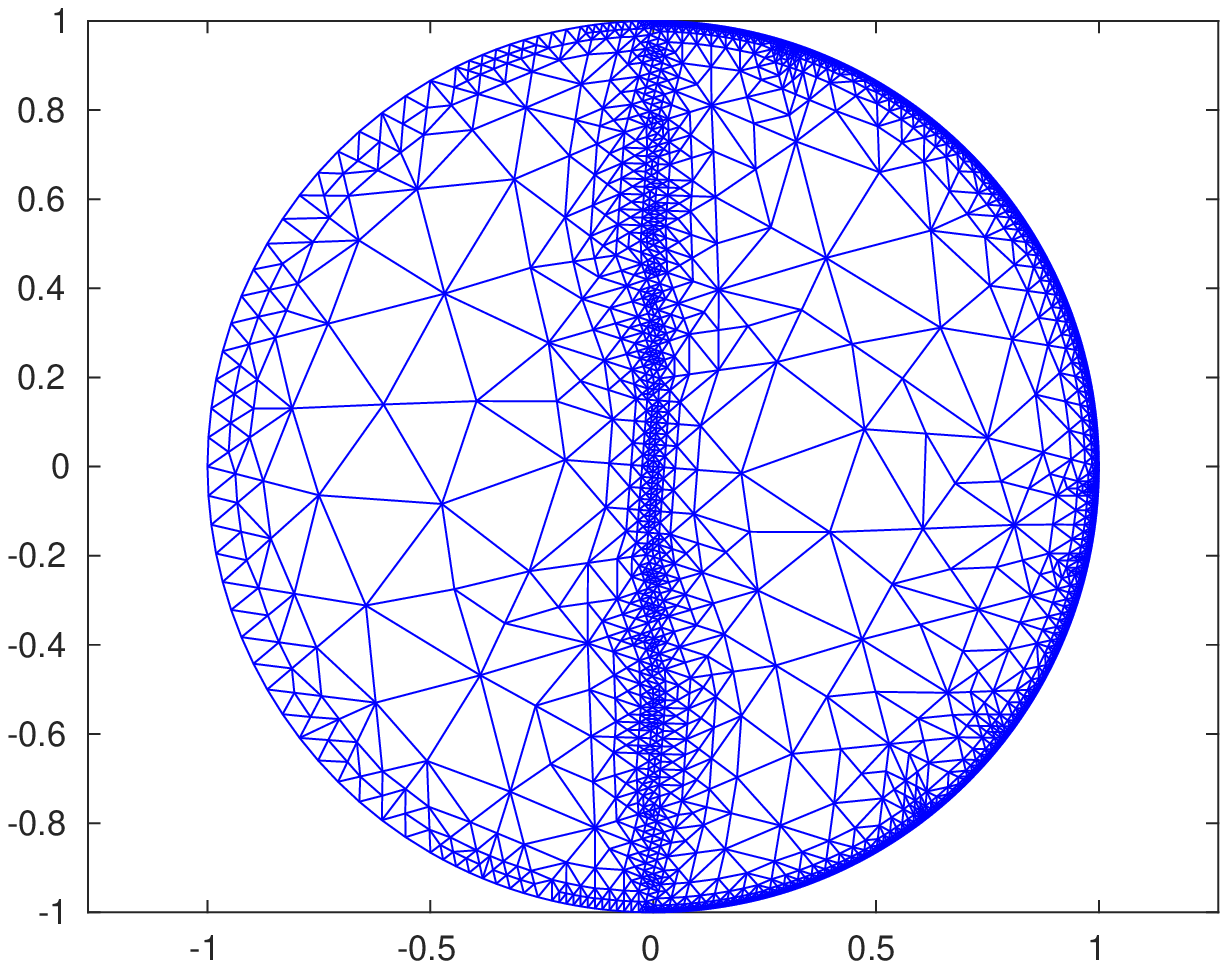}
\includegraphics[width=0.38\textwidth]{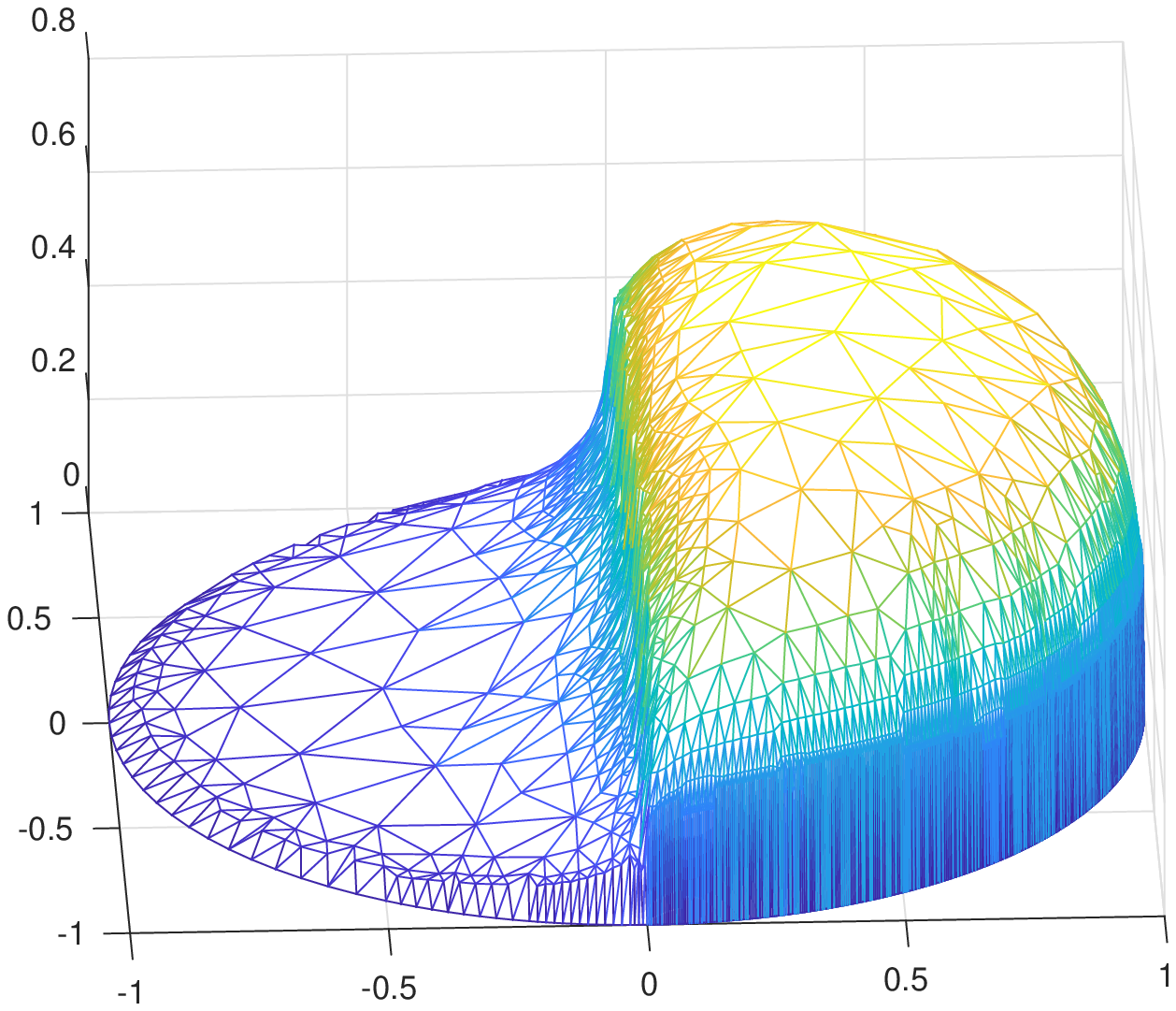}
\centering
\caption{Adaptively generated mesh ($s=0.25$, $\theta=0.3$) for problem with discontinuous right-hand side (left) and computed 
Galerkin solution (right).}
 \label{fig:gridDiscontF}
\end{figure}

In Figure~\ref{fig:gridDiscontF}, convergence rates for the error estimator and the error 
are depicted. 
The empirical results are the same as for the previous two examples. 

\begin{figure}[ht]
\begin{minipage}{.50\linewidth}
\includegraphics{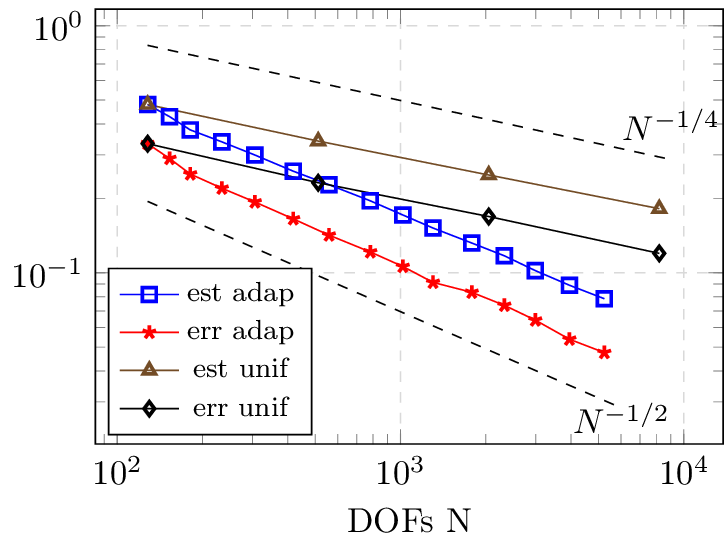}
\end{minipage}
\begin{minipage}{.49\linewidth}
\includegraphics{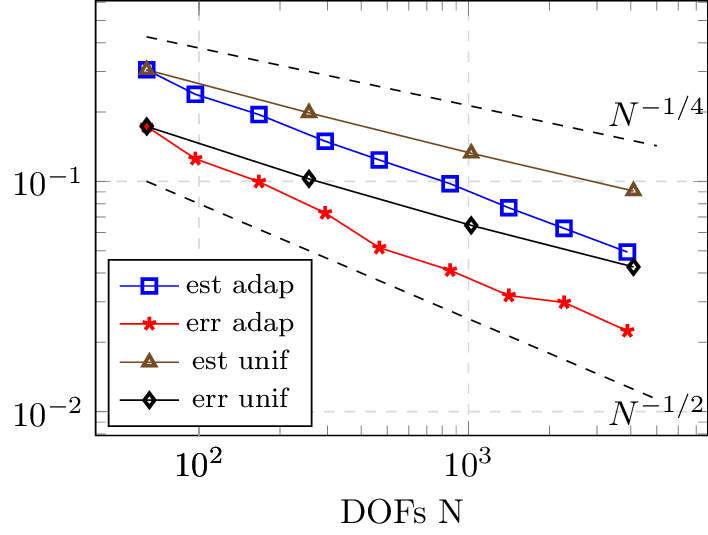}
\end{minipage}
\centering
\caption{Error and error estimator with discontinuous right-hand side 
on the unit circle for uniform and adaptive refinement; 
left: $s=0.25$, $\theta = 0.3$; 
right: $s=0.75$, $\theta = 0.5$.}
 \label{fig:errorCircleDisc}
\end{figure}

\clearpage

\bibliography{bibliography_2}{}
\bibliographystyle{amsalpha}
\end{document}